\newtheorem{thm}{Theorem}
\newtheorem{lem}[thm]{Lemma}
\newtheorem{cl}{Claim}
\newenvironment{pf}{\noindent{\bf Proof.}}{\hfill$\Box$ \\ }
\newcommand{\wtilpi}{\widetilde{\pi}}
\newcommand{\lst}{\ell^*}
\newcommand{\htau}{\hat{\tau}}
\newcommand{\case}[1]{\noindent\textbf{#1}}
\newcommand{\hatd}{\hat{d}}
\author[Erbes, Ferrara, Martin, Wenger]{Catherine Erbes$^{1,4}$\and Michael Ferrara$^{1,5}$\and Ryan R. Martin$^{2,6}$ \and Paul Wenger$^3$}
\title{On the approximate shape of degree sequences that are not potentially $H$-graphic}
\date{\today}
\begin{document}
\footnotetext[1]{Department of Mathematical and Statistical Sciences, University of Colorado Denver,{\tt \{catherine.erbes, michael.ferrara\}@ucdenver.edu}}
\footnotetext[2]{Department of Mathematics, Iowa State University, {\tt rymartin@iastate.edu}}
\footnotetext[3]{School of Mathematical Sciences, Rochester Institute of Technology, {\tt pswsma@rit.edu}}
\footnotetext[4]{Research supported in part by UCD GK12 Transforming Experiences Project, National Science Foundation Grant DGE-0742434.}
\footnotetext[5]{Research supported in part by Simons Foundation Collaboration Grant \#206692.}
\footnotetext[6]{Research supported in part by National Science Foundation grant DMS-0901008 and National Security Agency grant H98230-13-1-0226.}

\topmargin 0.4in

\begin{abstract}

A sequence of nonnegative integers $\pi$ is {\it graphic} if it is the degree sequence of some graph $G$.  In this case we say that $G$ is a \textit{realization} of $\pi$, and we write $\pi=\pi(G)$.  A graphic sequence $\pi$ is {\it potentially $H$-graphic} if there is a realization of $\pi$ that contains $H$ as a subgraph.

Given nonincreasing graphic sequences $\pi_1=(d_1,\ldots,d_n)$ and $\pi_2 = (s_1,\ldots,s_n)$, we say that $\pi_1$ {\it majorizes} $\pi_2$ if $d_i \geq s_i$ for all $i$, $1 \leq i \leq n$. In 1970, Erd\H{o}s showed that for any $K_{r+1}$-free graph $H$, there exists an $r$-partite graph $G$ such that $\pi(G)$ majorizes $\pi(H)$. In 2005, Pikhurko and Taraz generalized this notion and  showed that for any graph $F$ with chromatic number $r+1$, the degree sequence of an $F$-free graph is, in an appropriate sense, nearly majorized by the degree sequence of an $r$-partite graph.

  In this paper, we give similar results for degree sequences that are not potentially $H$-graphic.  In particular, there is a graphic sequence $\pi^*(H)$ such that if $\pi$ is a graphic sequence that is not potentially $H$-graphic, then  $\pi$ is close to being majorized by $\pi^*(H)$.  Similar to the role played by complete multipartite graphs in the traditional extremal setting, the sequence $\pi^*(H)$ asymptotically gives the maximum possible sum of a graphic sequence $\pi$ that is not potentially $H$-graphic.

AMS 2010 Subject Classification: Primary 05C07; Secondary 05C35
\end{abstract}

\maketitle

\section{Introduction}

All graphs considered in this paper are finite.  A connected component of a graph is \textit{nontrivial} if it has at least one edge.  We let $\Delta(G), \delta(G)$ and $\alpha(G)$ denote the maximum degree, minimum degree and independence number of a graph $G$, respectively and let $\vee$ denote the standard graph join.  Additionally, let $N(v)$ denote the neighborhood of a vertex $v$ in a graph $G$, and for $X\subseteq V(G)$ let $N_X(v)=N(v)\cap X$.  If $H$ is a subgraph of $G$, let $N_H(v)= N_{V(H)}(v)$ and also let $d(v)=|N(v)|$ and $d_X(v)=|N_X(v)|$.

A sequence of nonnegative integers $\pi$ is {\it graphic} if it is the degree sequence of some graph $G$.  Unless otherwise noted, we will assume that all graphic sequences are written in nonincreasing order.  In this case we say that $G$ \textit{realizes $\pi$} or is a \textit{realization} of $\pi$, and we write $\pi=\pi(G)$.  A graphic sequence $\pi$ is {\it potentially $H$-graphic} if there is a realization of $\pi$ that contains $H$ as a subgraph. Let $\pi(H)$ be the degree sequence of the graph $H$. If $\pi(H) = (s_1, \ldots, s_k)$, then we say $\pi = (d_1, \ldots, d_n)$ is {\it degree sufficient for $H$} if $d_i \geq s_i$ for each $i,~ 1 \leq i \leq k$.  When the context is clear, we will write $\pi = (d_1^{m_1}, \dots, d_t^{m_t})$ to indicate that degree $d_i$ has multiplicity $m_i$ in $\pi$.

In this paper, we study the structure of degree sequences that are not potentially $H$-graphic.  As the realizations of a graphic sequence may have a great deal of structural variety, it is perhaps more appropriate to say that we examine the ``shape" (in the Ferrer's diagram sense) of these sequences.  Our inspiration comes from several results on $H$-free graphs from the extremal literature.

Given (not necessarily graphic) sequences $S_1= (x_1, \ldots, x_n)$ and $S_2 = (y_1, \ldots, y_n)$, we say $S_1$ \textit{majorizes} $S_2$ and write $S_1\succeq S_2$ if $x_i \geq y_i$ for all $i$, $1 \leq i \leq n$.  In \cite{Erd}, Erd\H{o}s showed the following.

\begin{thm}\label{theorem:erdos_dom}
If $G$ is a $K_{r+1}$-free graph of order $n$, then there exists an $n$-vertex $r$-partite graph $F$ such that $\pi(F)\succeq \pi(G)$.
\end{thm}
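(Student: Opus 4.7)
The plan is to prove a slightly stronger statement by induction on $r$: for any $K_{r+1}$-free graph $G$ on $n$ vertices, there is an $r$-partite graph $F$ on the \emph{same} vertex set with $d_F(v) \geq d_G(v)$ for every $v \in V(G)$. This vertex-wise inequality implies $\pi(F) \succeq \pi(G)$, since sorting both sequences in nonincreasing order preserves the inequality (if $i$ vertices have $G$-degree at least $d$, the same $i$ vertices have $F$-degree at least $d$).

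The base case $r=1$ is immediate: $K_2$-free means $G$ has no edges, so $G$ itself is $1$-partite. For the inductive step, I would pick a vertex $v$ of maximum degree $\Delta$ in $G$ and partition $V(G)$ into $A = N(v)$ and $B = V(G) \setminus N(v)$ (so $|A| = \Delta$, $|B| = n - \Delta$, and $v \in B$). Since $v$ is adjacent to every vertex of $A$, the induced subgraph $G[A]$ must be $K_r$-free, so the inductive hypothesis produces an $(r-1)$-partite graph $F'$ on vertex set $A$ with $d_{F'}(u) \geq d_{G[A]}(u)$ for all $u \in A$.

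I would then define $F$ by taking $F'$ on $A$ together with all edges between $A$ and $B$ (and no edges inside $B$); equivalently $F = F' \vee \overline{K_B}$. The graph $F$ is $r$-partite, using the $r-1$ parts of $F'$ together with $B$ as the final part. For the degree check, each $u \in A$ satisfies $d_F(u) = d_{F'}(u) + |B| \geq d_{G[A]}(u) + |N_G(u) \cap B| = d_G(u)$, using that $|B|$ is an upper bound for $|N_G(u)\cap B|$. Each $w \in B$ satisfies $d_F(w) = |A| = \Delta \geq d_G(w)$ by the choice of $v$. This closes the induction and yields the theorem.

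The only conceptual obstacle is recognizing that the natural inductive statement in terms of majorization of sorted sequences is awkward to combine across the join, because majorization is unlabeled whereas the join construction on $A \cup B$ needs to line up specific vertices. Strengthening the induction to the vertex-wise inequality removes this difficulty, and the calculation after that is routine.
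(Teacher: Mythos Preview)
Your proof is correct and is essentially Erd\H{o}s's original argument. Note, however, that the paper does not supply its own proof of this theorem: it is stated as background and attributed to \cite{Erd}, so there is no in-paper proof to compare against. Your inductive construction---choosing a maximum-degree vertex $v$, applying the hypothesis to the $K_r$-free neighborhood $G[N(v)]$, and joining the resulting $(r-1)$-partite graph to the independent set $V(G)\setminus N(v)$---is the standard proof, and your observation that the induction should be phrased vertex-wise rather than in terms of sorted sequences is exactly the right way to make the join step go through cleanly.
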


Given positive integers $m$ and $k$, define $D_{k,m}(S_1)$ to be the sequence $$(\underbrace{x_k-m,\dots,x_k-m}_{k~\textrm{times}},x_{k+1}-m,\dots,x_n-m).$$  We say that $S_2$ will \textit{$(k,m)$-majorizes} $S_1$ if $S_2\succeq D_{k,m}(S_1)$. In 2005, Pikhurko and Taraz \cite{PikTar} used this notion to examine the shape of the degree sequences of general $H$-free graphs.

\begin{thm}\label{theorem:pik_tar_dom}
Let $H$ be a graph with chromatic number $\chi(H)=r+1\ge 2.$ For
any $\epsilon > 0$ and $n\ge n_0(\epsilon, H)$, the degree sequence $\pi$ of an $H$-free graph
$G$ of order $n$ is $(\epsilon n, \epsilon n)$-majorized by the degree sequence of some $r$-partite graph of order $n$.
\end{thm}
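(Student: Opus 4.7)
The plan is to apply Szemer\'edi's Regularity Lemma (SRL) to $G$, build the reduced graph $R$, show $R$ is $K_{r+1}$-free, invoke Theorem~\ref{theorem:erdos_dom} on $R$, and blow up the result to obtain $F$.

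I would first apply SRL to $G$ with parameters $\epsilon'$ and $d$ satisfying $\epsilon'+d\ll\epsilon$, obtaining an equipartition $V_0,V_1,\ldots,V_M$ with $|V_0|\le\epsilon' n$ and $|V_i|=L$ for $i\ge 1$. Form the reduced graph $R$ on $\{V_1,\ldots,V_M\}$ by joining $V_iV_j$ exactly when the pair is $\epsilon'$-regular with density at least $d$. If $R$ contained a copy of $K_{r+1}$, the Embedding (Key) Lemma applied to the corresponding $r+1$ parts of the SRL partition would produce a copy of the complete balanced $(r+1)$-partite graph $K_{r+1}(|V(H)|)$ inside $G$. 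Since $\chi(H)=r+1$ forces $H\subseteq K_{r+1}(|V(H)|)$, this contradicts $H$-freeness, so $R$ is $K_{r+1}$-free.

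Next, Theorem~\ref{theorem:erdos_dom} applied to $R$ yields an $r$-partite graph $R^*$ on $V(R)$ with $\pi(R^*)\succeq\pi(R)$. I would blow $R^*$ up to an $r$-partite graph $F$ on $n$ vertices by replacing each $V_i$ with the $L$ actual vertices it comprises, making each edge of $R^*$ a complete bipartite graph between the corresponding blocks, and adding the $V_0$ vertices to any one block as isolates. Every non-exceptional $v$ in the block of $V_j$ then has $d_F(v)=L\cdot\deg_{R^*}(V_j)$.

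To compare sorted degree sequences, call $v\in V_j$ \emph{typical} if its edges to parts $V_i$ with $V_iV_j\notin E(R)$ total at most $\epsilon n/2$. Standard cleanup bounds on edges in irregular pairs, low-density pairs, within parts, and incident to $V_0$ show that all but at most $\epsilon n/2$ vertices are typical, with $d_G(v)\le L\cdot\deg_R(V_j)+\epsilon n/2$ for each typical $v$. Sort the $M$ blocks independently by $\deg_R$ and by $\deg_{R^*}$; Theorem~\ref{theorem:erdos_dom} gives $\deg_{R^*}(V_{\tau(i)})\ge\deg_R(V_{\sigma(i)})$ for every rank $i$. Each rank corresponds to a block of $L$ consecutive entries in the sorted degree sequences of $G$ and $F$ (after setting aside atypical vertices and $V_0$), which yields $f_i\ge d_i-\epsilon n$ for all $i>\epsilon n$ once exceptional vertices are positioned in the first $\epsilon n$ slots---precisely $(\epsilon n,\epsilon n)$-majorization.

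The main obstacle is this last comparison step: since Theorem~\ref{theorem:erdos_dom} supplies only sequence-level (not vertex-level) majorization, we cannot directly identify each $V_j$ in $R$ with its ``image'' in $R^*$, and must instead match blocks by rank. The bookkeeping to ensure that the atypical vertices together with $V_0$ number at most $\epsilon n$ is the real care required, and is handled by choosing $\epsilon'+d$ small relative to $\epsilon$ and taking $n$ large enough that $L$ is large.
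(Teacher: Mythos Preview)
The paper does not prove Theorem~\ref{theorem:pik_tar_dom}; it is quoted from Pikhurko and Taraz~\cite{PikTar} as background and motivation, so there is no ``paper's own proof'' to compare against. Your outline via the Regularity Lemma, the Embedding Lemma to force $R$ to be $K_{r+1}$-free, Theorem~\ref{theorem:erdos_dom} applied to $R$, and a blow-up is exactly the standard route and is essentially what Pikhurko and Taraz do in~\cite{PikTar}.

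One remark on the sketch itself: the final rank-matching step is a little looser than you indicate. From ``typical $v\in V_j$ has $d_G(v)\le L\deg_R(V_j)+\epsilon n/2$'' together with $\pi(R^*)\succeq\pi(R)$ one gets, for every threshold $t$, that $|\{v:d_G(v)\ge t\}|\le |\{v:d_F(v)\ge t-\epsilon n/2\}|+(\text{atypical}+|V_0|)$, which translates to $f_{i}\ge d_{i+\epsilon n}-\epsilon n/2$ rather than directly $f_i\ge d_i-\epsilon n$. This is still enough for $(\epsilon n,\epsilon n)$-majorization, since $D_{\epsilon n,\epsilon n}(\pi)$ replaces the first $\epsilon n$ entries by $d_{\epsilon n}-\epsilon n$ and one only needs $f_i\ge d_{\max(i,\epsilon n)}-\epsilon n$; but the inequality you wrote does not follow without this observation. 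Apart from that bookkeeping point, the argument is sound.
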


It was noted in \cite{PikTar} that both the operation of ``leveling off" the first $k$ terms of $\pi(G)$ and the operation of reducing all of the terms in $\pi(G)$ by $m$ are necessary.

That the degree sequences of $r$-partite graphs appear as the bounding class in Theorems \ref{theorem:erdos_dom} and \ref{theorem:pik_tar_dom} is unsurprising given the central role played by the Tur\'{a}n graph $T_{n,r}$, the complete $r$-partite graph of order $n$ with parts as equal as possible, in the extremal literature.

The {\it extremal number}, denoted $ex(H,n)$, is the maximum number of edges in a graph of order $n$ that does not contain $H$ as a subgraph.  While the exact value of the extremal function is known for very few graphs (cf. \cite{BK,CGPW,EFGG, Tur41}), in 1966 Erd\H{o}s and Simonovits \cite{ESS} extended previous work of Erd\H{o}s and Stone \cite{ES} and determined $ex(H, n)$ asymptotically for arbitrary $H$.  More precisely, this seminal theorem gives exact asymptotics for $ex(H,n)$ when $H$ is a nonbipartite graph.

\begin{thm}[The Erd\H{o}s-Stone-Simonovits Theorem]
If $H$ is a graph with chromatic number $\chi(H)=r+1\ge 2$, then $$ex(H,n)=\max\{|E(G)| \colon |G|=n, H\not\subseteq G\}=|E(T_{n,r})| + o(n^2).$$
\end{thm}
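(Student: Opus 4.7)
My plan is to establish the two matching bounds $|E(T_{n,r})| \le ex(H,n) \le |E(T_{n,r})| + o(n^2)$. The lower bound is straightforward: since $T_{n,r}$ is $r$-colorable, it contains no subgraph of chromatic number $r+1$, and in particular no copy of $H$. A direct counting argument shows $|E(T_{n,r})| = (1 - 1/r)\binom{n}{2} + O(n)$, which yields $ex(H,n) \ge |E(T_{n,r})|$.

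The upper bound is the substantive part, and I would deduce it from the Erd\H{o}s--Stone theorem, which asserts that for every integer $t \ge 1$ and every $\epsilon > 0$, any graph on $n \ge n_0(\epsilon,t,r)$ vertices with at least $(1 - 1/r + \epsilon)\binom{n}{2}$ edges contains the complete balanced $(r+1)$-partite graph $K_{r+1}(t)$. Granted this, I set $t := |V(H)|$ and observe that because $\chi(H) = r+1$, the graph $H$ embeds into $K_{r+1}(t)$ by placing each color class of a proper $(r+1)$-coloring into a distinct part. Any $G$ with more than $|E(T_{n,r})| + \epsilon n^2$ edges therefore contains $H$, giving $ex(H,n) \le |E(T_{n,r})| + o(n^2)$ after letting $\epsilon \to 0$ slowly in $n$.

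The main obstacle is proving Erd\H{o}s--Stone itself. My preferred approach is via Szemer\'edi's Regularity Lemma. Given $G$ with at least $(1 - 1/r + \epsilon)\binom{n}{2}$ edges, I apply the regularity lemma with a sufficiently small parameter $\epsilon' \ll \epsilon$, then delete edges in irregular pairs, in pairs of density below $\epsilon/4$, and inside the parts; standard bookkeeping shows this removes only $O(\epsilon n^2)$ edges. The resulting ``reduced graph'' on the cluster set still has edge density exceeding $1 - 1/r$, so Tur\'an's theorem provides a copy of $K_{r+1}$ among the clusters. A standard greedy embedding then uses the regularity of the $\binom{r+1}{2}$ chosen pairs to locate $K_{r+1}(t)$ in $G$: one iteratively selects vertices from one cluster whose neighborhoods in the other $r$ clusters remain a constant fraction of each cluster's size, invoking the defect form of $\epsilon$-regularity at each step to control how many candidate vertices have ``atypically small'' neighborhoods.

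The crux of the whole argument lies in this final embedding step together with the regularity lemma; an entirely elementary alternative (Erd\H{o}s and Stone's original double induction on $r$ and $t$, driven by a density-increment argument inside a common neighborhood) is available but more intricate. The remaining ingredients -- Tur\'an's theorem and the edge count for $T_{n,r}$ -- are standard, so the bulk of the work is packaged inside the regularity/embedding machinery.
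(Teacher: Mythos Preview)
The paper does not actually prove the Erd\H{o}s--Stone--Simonovits Theorem; it is stated as classical background (with references \cite{ES,ESS}) to motivate the potential-function analogue that is the paper's real subject. So there is no ``paper's own proof'' to compare against.

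Your outline is a correct sketch of the standard modern proof. The lower bound via $T_{n,r}$ is fine, the reduction of the upper bound to Erd\H{o}s--Stone by embedding $H$ into $K_{r+1}(t)$ with $t=|V(H)|$ is the usual move, and the regularity-lemma route (clean up, apply Tur\'an to the reduced graph, then embed $K_{r+1}(t)$ greedily using the counting/embedding lemma) is the textbook argument. Your remark that the original Erd\H{o}s--Stone proof proceeds by an elementary double induction on $r$ and $t$ via density increment is also accurate, and that approach is closer in spirit to what Erd\H{o}s, Stone, and Simonovits actually did in 1946 and 1966, long before the Regularity Lemma existed. Either route would be acceptable; just be aware that in a self-contained write-up the Regularity Lemma itself is a substantial black box, whereas the original argument, though fiddlier, requires no external machinery.
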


It is our goal to examine the structure of degree sequences that are not potentially $H$-graphic in a manner similar to Theorems \ref{theorem:erdos_dom} and \ref{theorem:pik_tar_dom}.
In order to do so, we will next discuss a recent result on potentially $H$-graphic sequences that will allow us to identify the sequences that form our bounding class. For a graphic sequence $\pi$, we let $\sigma(\pi)$ denote the sum of the terms of $\pi$.

\subsection{The Potential Function $\sigma(H,n)$}

In 1991, Erd\H{o}s, Jacobson and Lehel \cite{EJL} proposed the following problem:\\

\begin{center}
 Determine $\sigma(H,n)$, the minimum even integer such that every $n$-term graphic sequence $\pi$ with $\sigma(\pi)\ge \sigma(H,n)$ is potentially $H$-graphic.\\
\end{center}

  We refer to $\sigma(H,n)$ as the {\it potential number} or {\it potential function} of $H$.  As $\sigma(\pi)$ is twice the number of edges in any realization of $\pi$, the Erd\H{o}s-Jacobson-Lehel problem can be viewed as a degree sequence relaxation of the Tur\'{a}n problem.  While the exact value of $\sigma(H,n)$ has been determined for a number of specific graph classes (c.f. \cite{ChenLiYin,EJL,F,FS,LiXia,LiYin2}), little was known about the behavior of the potential function for general graphs until recently when Ferrara, LeSaulnier, Moffatt and Wenger ~ \cite{FLMW} determined $\sigma(H,n)$ asymptotically for all $H$.  We describe their result next.

Let $H$ be a graph on $k$ vertices with at least one nontrivial connected component.  For each $i\in\{\alpha(H)+1,\ldots, k\}$, define
\[
\nabla_i(H)=\min\left\{\Delta(F) : F\leq H, |V(F)|=i\right\},
\]
where $F \leq H$ denotes that $F$ is an induced subgraph of $H$.
Let $n$ be sufficiently large, and consider the sequence
\[
\widetilde{\pi}_i(H,n) = ((n-1)^{k-i}, (k-i + \nabla_i(H)-1)^{n-k+i}).
\]

\noindent This sequence is graphic provided that $n-k+i$ and $\nabla_i(H) -1$ are not both odd. If they are both odd, then reduce the last term of the sequence by 1. As given in \cite{FLMW}, the resulting sequence is graphic, but not potentially $H$-graphic.  Consequently, $\sigma(H,n)\ge \max_i \left(\sigma(\widetilde{\pi}_i(H,n)\right)$. This maximum is attained by $\widetilde{\pi}_{i^*}(H,n)$, where we define $i^*=i^*(H)$ to be the smallest index $i$ in $\{\alpha(H)+1,\dots,k\}$ that minimizes the quantity $2i-\nabla_i(H)$ and therefore maximizes $\sigma(\widetilde{\pi}_i(H,n))$.  The main result of \cite{FLMW} states that $\widetilde{\pi}_{i^*}(H,n)$ determines $\sigma(H,n)$ asymptotically for all $H$, which can be viewed as an Erd\H{o}s-Stone-Simonovits-type theorem for the Erd\H{o}s-Jacobson-Lehel problem.

\begin{thm}[Ferrara, LeSaulnier, Moffatt and Wenger~\cite{FLMW}]

If $H$ is a graph and $n$ is a positive integer, then $$\sigma(H,n) = \sigma(\widetilde{\pi}_{i^*}(H,n)) + o(n).$$
\end{thm}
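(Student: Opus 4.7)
The plan is to prove matching lower and upper bounds for $\sigma(H,n)$. The lower bound $\sigma(H,n) \ge \sigma(\widetilde{\pi}_{i^*}(H,n))$ follows directly from the construction described just before the theorem: after at most a one-unit parity correction, $\widetilde{\pi}_{i^*}(H,n)$ is graphic but not potentially $H$-graphic. To verify the second claim, I would observe that any realization of $\widetilde{\pi}_{i^*}(H,n)$ has the form $K_{k-i^*} \vee B$, where $B$ is a graph on $n-k+i^*$ vertices with maximum degree $\nabla_{i^*}(H)-1$. Any embedding of $H$ uses at most $k-i^*$ universal vertices, so at least $i^*$ vertices of $V(H)$ map into $B$; but $H$ restricted to any such $i^*$-set has, by the definition of $\nabla_{i^*}(H)$, a vertex of degree at least $\nabla_{i^*}(H)$, which needs $\nabla_{i^*}(H)$ neighbors in $B$, a contradiction.

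For the matching upper bound, let $\pi=(d_1,\ldots,d_n)$ be a graphic sequence with $\sigma(\pi)\ge\sigma(\widetilde{\pi}_{i^*}(H,n))+\epsilon n$ for a suitable $\epsilon=\epsilon(H)>0$, and let $G$ be a realization of $\pi$. A direct computation gives $\sigma(\widetilde{\pi}_i(H,n))=n(2(k-i)+\nabla_i(H)-1)+O(1)$, so $i^*$ is the index in $\{\alpha(H)+1,\ldots,k\}$ maximizing the linear-in-$n$ coefficient, matching the minimization of $2i-\nabla_i(H)$. The key structural step is a dichotomy: either (a) $G$ contains at least $k-i^*$ vertices of degree $n-O(1)$, or (b) the top degrees of $\pi$ are bounded above by $n-\omega(1)$, in which case the definition of $\nabla_j(H)$ for $j>i^*$, combined with the optimality of $i^*$, forces $\sigma(\pi)<\sigma(\widetilde{\pi}_{i^*}(H,n))+\epsilon n$, contradicting the hypothesis. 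In case (a), 2-switch operations (which preserve $\pi$) can be used to modify $G$ into a realization in which the top $k-i^*$ vertices form a universal set $U$, since the degree deficit of each such vertex is a constant and the remaining graph has abundant room to accommodate the swaps.

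With $U$ in hand, it suffices to embed the induced subgraph $H[S]$, where $S\subseteq V(H)$ is an $i^*$-set attaining $\Delta(H[S])=\nabla_{i^*}(H)$, into $G-U$; the vertices of $V(H)\setminus S$ map arbitrarily into $U$. The slack of $\epsilon n$ in $\sigma(\pi)$ over the extremal construction means $G-U$ has many more edges than a graph in which every vertex has degree exactly $\nabla_{i^*}(H)-1$, so a greedy embedding combined with a small number of 2-switches inside $G-U$ suffices, noting that $|S|$ is a constant depending only on $H$. The main obstacle will be executing the dichotomy in (a)/(b) rigorously: making precise the sense in which a graphic sequence with insufficiently many near-$(n-1)$ degrees must be captured by one of the $\widetilde{\pi}_j$ constructions, and using the minimality of $i^*$ to exclude every other candidate index $j$. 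The parity corrections in the definition of $\widetilde{\pi}_i(H,n)$ contribute only $O(1)$ to the degree sum and are absorbed by the $o(n)$ error term.
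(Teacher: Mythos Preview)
This theorem is not proved in the present paper; it is quoted from~\cite{FLMW} as background, and the paper supplies no argument for it beyond the lower-bound construction already described in the paragraph preceding the statement. So there is nothing in this paper to compare your proposal against.

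That said, a few remarks on your sketch. Your lower-bound argument is essentially correct and matches what the paper records: in any realization of $\widetilde{\pi}_{i^*}(H,n)$ the $k-i^*$ vertices of degree $n-1$ are universal, the remaining vertices induce a graph of maximum degree $\nabla_{i^*}(H)-1$, and any copy of $H$ would place at least $i^*$ vertices there, contradicting the definition of $\nabla_{i^*}$.

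The upper-bound half, however, is where the real work lies, and your dichotomy (a)/(b) is not yet an argument. Saying that if fewer than $k-i^*$ vertices have degree $n-O(1)$ then ``the definition of $\nabla_j(H)$ for $j>i^*$, combined with the optimality of $i^*$, forces $\sigma(\pi)$ to be too small'' is precisely the statement to be proved, not a step toward it: a graphic sequence can have its mass distributed in many ways that do not resemble any $\widetilde{\pi}_j$. Likewise, in case~(a) the assertion that a greedy embedding plus a few 2-switches places $H[S]$ inside $G-U$ glosses over the possibility that the excess edges in $G-U$ are concentrated on very few vertices, leaving most of $G-U$ with degree below $\nabla_{i^*}(H)$; one needs a genuine structural lemma (of the flavor of Lemma~\ref{lem:Hgraph} or Lemma~\ref{biclique} in this paper) rather than an appeal to ``abundant room.'' You have correctly identified the overall architecture, but the load-bearing lemmas are missing.
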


\subsection{Main Results.} Through the remainder of this paper, unless otherwise noted we will assume that all sequences have minimum term at least 1. Given two $n$-term graphic sequences $\pi_1=(d_1, \ldots, d_n)$ and $\pi_2$, and nonnegative integers $a_1, a_2$, and $b$ with $a_1 \leq a_2$, we say that $\pi_1$ is $([a_1,a_2], b)$-\textit{close} to $\pi_2$ if there is a (not necessarily graphic) sequence $\pi_1'$ with $\pi_2\succeq \pi_1'$ such that $\pi_1'$ can be obtained from $\pi_1$ via the following two steps:

\begin{enumerate}

\item Create the sequence
\[
S_1=(d_1, \ldots, d_{a_1-1}, \underbrace{d_{a_2}, \ldots, d_{a_2}}_{a_2-a_1+1~\textrm{times}}, d_{a_2+1}, \ldots, d_n).
\]

\item Create $\pi_1'$ from $S_1$ by subtracting a \textit{total} of at most $b$ from the terms of $S_1$.

\end{enumerate}

We will refer to step (1) as {\it leveling off} terms $a_1$ to $a_2$ of $\pi_1$ and the procedure in step (2) as {\it editing} the sequence $S_1$.

In contrast to the idea of $(k,m)$-majorization, $([a_1,a_2],b)$-closeness leaves the first $a_1 -1$ terms unchanged, and after the leveling off step allows for variable editing, provided that the total amount of editing in step (2) is at most $b$.

We show that if a sequence is not potentially $H$-graphic, then it is close (in the above sense) to being majorized by one of the sequences $\widetilde{\pi_i}(H,n)$. Our main results are as follows.

\begin{thm}\label{thm:NotDegSuff}
Let $H$ be a graph with degree sequence $\pi(H) = (h_1, \ldots, h_k)$, and let $\pi = (d_1, \ldots, d_n)$ be a graphic sequence that is not degree sufficient for $H$. Further, let $j$ be the largest integer for which $d_{k-j+1} < h_{k-j+1}$. If $j \geq \alpha(H)+1$, then $\pi$ is majorized by $\wtilpi_{j}(H,n)$. If $j < \alpha(H)+1$, then $\pi$ is $([k-\alpha(H), k-j+1], 0)$-close to $\wtilpi_{\alpha(H)+1}(H,n)$.  This result is best possible.
\end{thm}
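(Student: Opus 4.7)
The plan is to reduce the theorem to a single structural inequality relating $h_{k-j+1}$ to $\nabla_j(H)$, and then to verify each case by a term-by-term comparison. The key observation is that for every $j$ with $\alpha(H)+1 \leq j \leq k$, we have $h_{k-j+1} \leq (k-j) + \nabla_j(H)$. To see this, let $F\leq H$ be an induced subgraph on $j$ vertices achieving $\Delta(F)=\nabla_j(H)$. Each vertex $v\in V(F)$ satisfies
\[
d_H(v) = d_F(v) + |N_H(v)\setminus V(F)| \leq \Delta(F) + (k-j) = \nabla_j(H) + (k-j),
\]
so at least $j$ vertices of $H$ have $H$-degree at most $\nabla_j(H)+k-j$; arranging degrees in nonincreasing order gives the claim.

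By the maximality of $j$ in the hypothesis, $d_{k-j+1} < h_{k-j+1}$, hence $d_{k-j+1}\leq h_{k-j+1}-1$. In Case 1, where $j\geq \alpha(H)+1$, the key inequality immediately yields $d_{k-j+1}\leq (k-j)+\nabla_j(H)-1$, and by monotonicity of $\pi$ the same bound holds for every $d_i$ with $i\geq k-j+1$. Combined with the trivial bound $d_i\leq n-1$ for $i\leq k-j$, this is exactly $\wtilpi_j(H,n)\succeq \pi$.

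In Case 2, where $j\leq \alpha(H)$, I would apply the key inequality with $\alpha(H)+1$ in place of $j$ to obtain $h_{k-\alpha(H)} \leq (k-\alpha(H)-1) + \nabla_{\alpha(H)+1}(H)$. Since $k-j+1 > k-\alpha(H)$, monotonicity of $\pi(H)$ gives $h_{k-j+1}\leq h_{k-\alpha(H)}$, so $d_{k-j+1} < h_{k-j+1}$ forces $d_{k-j+1}\leq (k-\alpha(H)-2) + \nabla_{\alpha(H)+1}(H)$. Leveling off positions $k-\alpha(H)$ through $k-j+1$ of $\pi$ to the common value $d_{k-j+1}$, the first $k-\alpha(H)-1$ entries are bounded by $n-1$ and everything from position $k-\alpha(H)$ onward is bounded by $d_{k-j+1}$, matching the tail of $\wtilpi_{\alpha(H)+1}(H,n)$. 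This is exactly $([k-\alpha(H), k-j+1], 0)$-closeness with editing budget $0$.

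The main obstacle I anticipate is isolating and proving the structural inequality cleanly; once it is in place both cases reduce to routine comparisons against the explicit tail value of $\wtilpi_i(H,n)$. For the best-possibleness claim, I would construct explicit graphs $H$ together with sequences $\pi$ whose entries $d_{k-j+1}$ equal the bound above, witnessing that neither the choice of $\wtilpi_{\alpha(H)+1}(H,n)$ as majorant in Case 2 nor the prescribed leveling range can be improved.
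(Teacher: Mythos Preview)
Your proposal is correct and follows essentially the same strategy as the paper: both arguments hinge on the structural inequality $h_{k-j+1}\le (k-j)+\nabla_j(H)$ for $j\ge\alpha(H)+1$, proved exactly as you indicate via an induced subgraph $F$ realizing $\nabla_j(H)$, and then finish each case by term-by-term comparison with the explicit tail of $\wtilpi_i(H,n)$. The only minor difference is in Case~2: the paper proves a separate bound $h_{k-j+1}\le k-\alpha(H)$ directly from the existence of an independent set of size $\alpha(H)$, whereas you invoke the same key inequality at $i=\alpha(H)+1$ together with monotonicity of $\pi(H)$ to get $d_{k-j+1}\le (k-\alpha(H)-2)+\nabla_{\alpha(H)+1}(H)$, which is exactly the tail value of $\wtilpi_{\alpha(H)+1}(H,n)$; this is a slight streamlining but not a different method.
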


The case where $\pi$ is degree sufficient for $H$ seems to be much more technical, and requires both the leveling off and editing operations outlined above.  Recall that $i^*=i^*(H)$ is the smallest $i$ in $\{\alpha(H)+1,\dots,k\}$ that minimizes $2i-\nabla_i(H)$.

\begin{thm}\label{thm:DegSuff}
Let $H$ be a graph of order $k$ with at least one nontrivial component and let $\pi$ be an $n$-term graphic sequence that is degree sufficient for $H$. If $\pi$ is not potentially $H$-graphic, then $\pi$ is $([k-i^*+1, k], (6\alpha~ +~3) k^2+ \alpha^3 k)$-close to $\wtilpi_{i^*}(H,n)$.
\end{thm}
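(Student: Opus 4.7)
The plan is to fix any $H$-free realization $G$ of $\pi$, which exists since $\pi$ is not potentially $H$-graphic, and reduce the closeness statement to a concrete numerical inequality on the tail of $\pi(G)$. Set $\alpha = \alpha(H)$, $\widetilde{d} = k - i^* + \nabla_{i^*}(H) - 1$, and write $\widetilde{\pi} = \widetilde{\pi}_{i^*}(H,n)$. Unwinding the definition of $([k-i^*+1, k], b)$-closeness, the leveled sequence $S_1$ takes the value $d_k$ in positions $k-i^*+1,\ldots,k$ and is unchanged elsewhere, while $\widetilde{\pi}$ has value $n-1$ in positions $1,\ldots,k-i^*$ (which trivially dominates) and value $\widetilde{d}$ in all remaining positions. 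Thus the minimum total subtraction needed so that the edited sequence is majorized by $\widetilde{\pi}$ equals
\[
E \;=\; i^*\,(d_k - \widetilde{d})^+ \;+\; \sum_{j = k+1}^{n} (d_j - \widetilde{d})^+,
\]
and the theorem reduces to the inequality $E \leq (6\alpha+3)k^2 + \alpha^3 k$.

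Identifying positions of $\pi$ with the vertices of $G$ in nonincreasing degree order, let $U$ be the $k - i^*$ vertices of largest degree and $V' = V(G) \setminus U$. The structural goal is to bound the set $B := \{v \in V' : d_G(v) > \widetilde{d}\}$ and its total excess $\sum_{v \in B}(d_G(v) - \widetilde{d})$, which (together with a leveling contribution when $d_k > \widetilde{d}$) controls $E$. Arguing by contradiction, suppose this quantity exceeds the claimed budget. The plan is to construct a copy of $H$ in $G$ in two stages, contradicting $H$-freeness. First, show via degree-sufficiency and an edge-switching argument (or by comparison with the lower bound construction from~\cite{FLMW}) that every $u \in U$ is nearly universal, missing at most $O(k\alpha)$ vertices of $G$. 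Second, fix an induced subgraph $F^* \leq H$ on $i^*$ vertices with $\Delta(F^*) = \nabla_{i^*}(H)$ (which exists by the very definition of $\nabla_{i^*}$), locate an induced copy of $F^*$ inside $G[B]$, and map the remaining $k - i^*$ vertices of $H$ into $U$; since each vertex of $U$ is adjacent to all but $O(k\alpha)$ vertices of $V'$, a suitable choice of representatives covers the chosen image of $F^*$, completing the embedding.

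The main obstacle is the second step: locating an induced copy of the specific graph $F^*$, with both the correct adjacencies and the correct non-adjacencies, inside $G[B]$ using only the excess-degree hypothesis. A Ramsey-style greedy procedure handles this — at each of the $i^* \leq k$ placement steps, the number of candidates in $B$ eliminated by a forced non-adjacency is at most $\alpha - 1$ (since $\alpha(F^*) \leq \alpha$), and the number eliminated by a forced adjacency is controlled by the current partial embedding — yielding the dominant $(6\alpha + 3)k^2$ contribution after careful bookkeeping on each of the at most $\binom{k}{2}$ pairs of vertices of $F^*$. The residual $\alpha^3 k$ term arises in the degenerate regime where $|B|$ is small but its excess is concentrated on a few vertices of very high degree; here the greedy scheme is bypassed in favor of exploiting such vertices individually, with the cubic dependence on $\alpha$ tracking the worst-case interactions among their neighborhoods in $V'$. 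Assembling these bounds produces the claimed budget and completes the contradiction.
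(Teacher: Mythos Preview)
Your proposal has a fundamental gap in the first structural step. You fix a realization $G$ and claim that the top $k-i^*$ vertices $U$ can be made ``nearly universal, missing at most $O(k\alpha)$ vertices'' via degree-sufficiency and edge-switching. This is simply false: degree-sufficiency for $H$ only forces $d_j \ge h_j \le k-1$, so the vertices of $U$ may have degree as small as $k-1$ in \emph{every} realization of $\pi$ (take $H=K_k$, for instance). Edge-switching preserves degrees, so no amount of switching can make these vertices close to universal when $n$ is large. Without near-universality of $U$, your embedding scheme---map $H\setminus F^*$ onto $U$ and use universality to cover the image of $F^*$---collapses entirely. Your second step is equally unsupported: you assert a Ramsey-type greedy embedding of an \emph{induced} copy of $F^*$ inside $G[B]$, but the bound ``at most $\alpha-1$ candidates eliminated by a forced non-adjacency'' has no justification, since the number of non-neighbors of an already-placed vertex inside $B$ is governed by the structure of $G$, not by $\alpha(F^*)$.

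More broadly, you are attacking the wrong problem: you try to locate $H$ inside a fixed $H$-free realization, which is the traditional extremal question, whereas ``potentially $H$-graphic'' asks only that \emph{some} realization contain $H$. The paper's proof proceeds completely differently. It first invokes Theorem~\ref{thm:YinLi} to force $d_k\le 2k-4$ and $d_{2k}\le k-3$, which already caps the per-term editing in positions $k-i^*+1,\ldots,2k-1$. Setting $\ell^*=i^*-\nabla_{i^*}(H)$ and $f(H)\approx 6k^2+(\ell^*)^2k$, it then splits on whether $n<f(H)$ or $d_{f(H)}\le k-\ell^*-1$; in those cases the total editing is bounded by straight arithmetic. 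In the remaining case ($n\ge f(H)$ and $d_{f(H)}\ge k-\ell^*$) the paper does not bound the editing at all: instead it \emph{constructs} a realization of $\pi$ containing a split graph $K_{k-\ell^*}\vee\overline{K}_{\ell^*}\supseteq H$ (or $K_t\vee F_{k-t}$ when $d_{t+1}<k-1$ for some $t<k-\ell^*$), using a ``want sequence'' whose graphicality is verified by Erd\H{o}s--Gallai, together with Lemma~\ref{biclique} (a Kleitman--Wang laying-off argument) and Lemma~\ref{lem:Hgraph}. Note the split is at $k-\ell^*$, not $k-i^*$, precisely because no lower bound on the top degrees is available.
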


In Section~\ref{sharpness}, we provide an example of a graph $H$ and sequence that is not potentially $H$-graphic and requires $O(\alpha k^2)$ editing, proving that Theorem~\ref{thm:DegSuff} is in some sense best possible up to the coefficient of $\alpha k^2$.
However, this sharpness remains an open question in those cases when $\alpha>>k^{1/2}$.

\section{Preliminaries}

The following results will be used repeatedly throughout our proofs of Theorems \ref{thm:NotDegSuff} and \ref{thm:DegSuff}. The first is the well-known characterization of graphic sequences due to Erd\H{o}s and Gallai.

\begin{thm}[Erd\H{o}s and Gallai~\cite{ErdGal}] \label{thm:ErdGal}
A degree sequence $\pi = (d_1, \ldots, d_n)$ such that $d_1 \geq \cdots \geq d_n$  is graphic if and only if $\sum_{i=1}^n d_i$ is even and, for all $p\in\{1,\ldots,n-1\}$,
\begin{equation}
   \sum_{i=1}^p d_i\leq p(p-1)+\sum_{i=p+1}^n\min\{d_i,p\} . \label{eq:ErdGal}
\end{equation}
\end{thm}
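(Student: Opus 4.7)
The plan is to prove necessity by a direct double-counting argument and sufficiency by induction on $\sigma(\pi)$.

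For necessity, suppose $\pi$ is realized by $G$ with vertices labeled $v_1,\ldots,v_n$ so that $d(v_i) = d_i$. Fix $p \in \{1,\ldots,n-1\}$ and partition $V(G) = A \cup B$ with $A = \{v_1,\ldots,v_p\}$ and $B = \{v_{p+1},\ldots,v_n\}$. Then
\[
\sum_{i=1}^p d_i \;=\; 2|E(G[A])| + |E(A,B)|.
\]
The first term is at most $2\binom{p}{2} = p(p-1)$, and each $v_j \in B$ has at most $\min\{d_j, p\}$ neighbors in $A$, yielding (\ref{eq:ErdGal}). Evenness of $\sigma(\pi)$ is immediate since $\sigma(\pi) = 2|E(G)|$.

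For sufficiency, I would induct on $\sigma(\pi)$. The base case $\sigma(\pi) = 0$ is realized by the empty graph. For the inductive step, assume $\pi = (d_1,\ldots,d_n)$ is nonincreasing with even sum, satisfies all the EG inequalities, and $d_1 \geq 1$. The natural strategy is a Havel--Hakimi-type reduction: form $\pi^*$ by deleting $d_1$ and subtracting $1$ from each of the next $d_1$ largest terms, then resorting. Then $\sigma(\pi^*) = \sigma(\pi) - 2d_1$, so if $\pi^*$ still satisfies the EG inequalities, the inductive hypothesis produces a realization $G^*$ of $\pi^*$ on vertices $u_2,\ldots,u_n$; adding a new vertex $u_1$ adjacent to the $d_1$ vertices whose degrees were decremented then yields a realization of $\pi$.

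The technical heart of the proof, and the main obstacle, is verifying that $\pi^*$ inherits the EG inequalities from $\pi$. For each $p \in \{1,\ldots,n-2\}$ one compares the sum of the first $p$ terms of $\pi^*$ with the sum of the first $p$ or $p+1$ terms of $\pi$, depending on whether the decremented entries end up among the top $p$ after resorting. The bookkeeping splits into cases based on how many of the $d_1$ decremented indices lie in the top $p$ of $\pi^*$ and exploits monotonicity of $\pi$ to control how values shift during resorting; one then uses that each $\min\{d_j - 1, p\}$ on the new right-hand side is absorbed by $\min\{d_j, p\}$ together with the extra slack coming from the deleted entry $d_1$. Once this case analysis is carried out, the inductive step closes and sufficiency follows.
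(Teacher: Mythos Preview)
The paper does not prove Theorem~\ref{thm:ErdGal}; it is quoted as the classical Erd\H{o}s--Gallai criterion with a citation to~\cite{ErdGal} and used as a black box (notably inside the proof of Claim~\ref{cl:WantGraphic}). So there is no argument in the paper to compare your proposal against.

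That said, your necessity argument is correct and is the standard double count. Your sufficiency outline is a legitimate route, but as you yourself flag, the entire content lies in the step you leave undone: showing that the Havel--Hakimi residual $\pi^*$ again satisfies all Erd\H{o}s--Gallai inequalities. This is not just bookkeeping; it is exactly where the difficulty of the theorem sits, and the vague appeal to ``extra slack coming from the deleted entry $d_1$'' is not an argument. If you want a self-contained proof along these lines you will need to carry out that case analysis in full (or, equivalently, prove directly that the Erd\H{o}s--Gallai conditions imply the Havel--Hakimi residual is graphic). Alternative sufficiency proofs that avoid this case split include Choudum's short inductive argument and the constructive proof of Tripathi, Venugopalan, and West; either would be cleaner to cite or adapt than the sketch you have now.
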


The following Lemma is central to the proof of Theorem \ref{thm:DegSuff}, and is likely also of independent interest.  As the proof of this result is quite technical, we postpone it until Section \ref{section:biclique_proof}.

\begin{lem}\label{biclique}
Let $r$ and $k$ be positive integers with $r< k$, and let $\pi = (d_1, \ldots, d_n)$ be a graphic sequence. Suppose that $d_{k-r} - d_{k} \geq r(k+2)$. If there are at least $r(k+r+1)$ terms among $d_{k+1}, \ldots, d_n$ with values in $\{k-r, \ldots, k-1\}$,  then $\pi$ has a realization containing the graph $K_{k-r, r}$ with vertices of degree $d_1, \ldots, d_{k-r}$ forming the partite set of order $k-r$.
\end{lem}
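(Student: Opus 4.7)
My plan is to combine an extremal realization argument with iterative $2$-switches. Set $A=\{v_1,\dots,v_{k-r}\}$ and $M=\{v_{k-r+1},\dots,v_k\}$, and fix a subset $C\subseteq\{v_{k+1},\dots,v_n\}$ of $r(k+r+1)$ vertices whose degrees all lie in $\{k-r,\dots,k-1\}$, as guaranteed by hypothesis.  Since $C$ is nonempty, $d_k\geq k-r$, and the gap hypothesis yields $d_{k-r}\geq d_k+r(k+2)\geq k(r+1)+r$.

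Over all pairs $(G,W)$ where $G$ is a realization of $\pi$ and $W\subseteq C$ with $|W|=r$, pick one that lexicographically maximizes $(|E_G(A,W)|,\,|E_G(A,C)|)$.  The aim is to show $|E(A,W)|=r(k-r)$, so that $A\cup W$ carries the desired $K_{k-r,r}$.  Suppose otherwise and choose $a\in A$ and $w\in W$ with $aw\notin E(G)$.  I would then seek a $2$-switch $\{wy,ax\}\to\{aw,xy\}$ with $x\in N(a)\setminus W$, $y\in N(w)\setminus A$, $x\neq y$, and $xy\notin E(G)$; the removed edges lie outside $A\times W$, so such a swap strictly increases $|E(A,W)|$, contradicting maximality.

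The crux is showing such a $2$-switch must exist.  Were none to exist, every $y\in N(w)\setminus A$ would be adjacent to every $x\in(N(a)\setminus W)\setminus\{y\}$, and since $|N(a)\setminus W|\geq d(a)-r\geq d_{k-r}-r$ we would obtain
\[
d(y)\geq d_{k-r}-r-1\geq d_k+r(k+2)-r-1>d_k.
\]
Because $d_{k+1},\dots,d_n\leq d_k$, this forces $y\in\{v_1,\dots,v_k\}$, and since $y\notin A$ we conclude $y\in M$.  Hence $N(w)\subseteq A\cup M=\{v_1,\dots,v_k\}$.  Running the same analysis on a \emph{secondary} $2$-switch---one that adds an edge between $a\in A$ and a vertex $c\in C\setminus W$ while preserving $|E(A,W)|$---shows that if this swap is also blocked then $N(c)\subseteq A\cup M$.

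Finally, if both swap types are blocked throughout, every $c\in C$ that is not complete to $A$ satisfies $N(c)\subseteq A\cup M$.  Since at most $r-1$ vertices of $C$ are complete to $A$ in our extremal realization, at least $|C|-(r-1)=rk+r^2+1$ vertices of $C$ must send all of their at least $k-r$ edges into the size-$k$ set $A\cup M$, contributing (since each non-complete $c$ forces $|N_M(c)|\geq 1$) at least $rk+r^2+1$ edges into $M$.  I would then derive a contradiction by showing this forced edge count into the $r$-element set $M$ is incompatible with the Erd\H{o}s--Gallai inequality applied to $\{v_1,\dots,v_k\}$ together with the gap $d_{k-r}-d_k\geq r(k+2)$.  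I expect this final edge-counting step, where the factors $r(k+r+1)$ and $r(k+2)$ in the hypothesis become sharp, to be the main technical obstacle of the proof.
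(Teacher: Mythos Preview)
Your extremal-realization-plus-$2$-switch plan is quite different from the paper's proof (which runs an iterated Kleitman--Wang laying-off argument), and it has a real gap at the secondary swap.

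The primary swap is fine: restricting $x\in N(a)\setminus W$ costs only $|W|=r$ vertices, so $|N(a)\setminus W|\geq d_{k-r}-r$, and if every such swap is blocked then each $y\in N(w)\setminus A$ satisfies $d(y)\geq d_{k-r}-r-1>d_k$, forcing $y\in M$.  For $c\in C\setminus W$, however, the analogous step fails.  In order for the swap $\{cy',ax'\}\to\{ac,x'y'\}$ to contradict the lexicographic maximality of $(|E(A,W)|,|E(A,C)|)$ it must strictly increase $|E(A,C)|$ while not decreasing $|E(A,W)|$; tracking the four edge changes shows that when $x'\in C\setminus W$ and $y'\notin A$ the swap is perfectly legal yet leaves \emph{both} coordinates unchanged.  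So to obtain a contradiction you are forced to take $x'\in N(a)\setminus C$, and then
\[
|N(a)\setminus C|\ \geq\ d_{k-r}-|C|\ \geq\ d_k+r(k+2)-r(k+r+1)\ =\ d_k+r-r^{2}\ \leq\ d_k,
\]
so the blocked-swap conclusion yields only $d(y')\geq d_k-1$ (or worse), not $d(y')>d_k$, and you cannot force $y'\in M$.  The gap hypothesis $d_{k-r}-d_k\geq r(k+2)$ is calibrated to absorb a forbidden set of size $r$ (your $W$), not one of size $r(k+r+1)$ (your $C$).

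The closing edge count also does not produce a contradiction even if one grants $N(c)\subseteq A\cup M$ for every non-complete $c\in C$.  You obtain at least $rk+r^{2}+1$ edges into the $r$-set $M$, hence some $m\in M$ has degree exceeding $k+r$; but the lemma places no upper bound whatsoever on $d_{k-r+1},\dots,d_{k-1}$, and I do not see which Erd\H{o}s--Gallai inequality would supply one.  The paper avoids both difficulties by laying off candidate vertices one at a time (at most $r(k+1)$ of them), using the gap $d_{k-r}-d_k\geq r(k+2)$ only to guarantee that $v_1,\dots,v_{k-r}$ remain among the top $k-1$ positions after each lay-off, and then arguing combinatorially that at most $k-1$ ``bad'' lay-offs can occur between consecutive good ones.
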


The following gives a bound on the length of a sequence with fixed maximum term that is not potentially $H$-graphic.

\begin{lem}\label{lem:Hgraph}
Let $H$ be a graph with $\pi(H)=(h_1,\dots,h_k)$ and let $\pi = (d_1, \ldots, d_n)$ be a graphic sequence with $d_1\le M$ such that there are terms $d_{i_1},\dots,d_{i_k}$ of $\pi$ satisfying $d_{i_j}\ge h_j$ for $1\le j\le k$.  If $\pi$ has at least $2M^2 + k$ positive terms, then there is a realization $G$ of $\pi$ with a copy of $H$ that lies on vertices of degree $d_{i_1},\dots,d_{i_k}$.
\end{lem}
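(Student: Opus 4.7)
The plan is a switching argument that maximizes the number of $H$-edges already placed. Fix any realization of $\pi$ and label its vertices so that $v_{i_j}$ has degree $d_{i_j}$; set $V'=\{v_{i_1},\dots,v_{i_k}\}$, $W=V(G)\setminus V'$, and let $u_1,\dots,u_k$ be the vertices of $H$ with $\deg_H(u_j)=h_j$. Let $E^*=\{v_{i_j}v_{i_{j'}}:u_ju_{j'}\in E(H)\}$ be the target edge set on $V'$, and let $G$ be a realization of $\pi$ maximizing $|E(G)\cap E^*|$; the goal is to show $E^*\subseteq E(G)$.

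Suppose not, and fix a missing target edge $e=v_{i_j}v_{i_{j'}}\in E^*\setminus E(G)$. Among the (at most) $h_j$ potential target neighbors of $v_{i_j}$, the vertex $v_{i_{j'}}$ is a target that is not a neighbor, so at most $h_j-1$ of $v_{i_j}$'s neighbors lie in $E^*$; since $\deg_G(v_{i_j})\ge h_j$, there exists a non-target neighbor $b\in N_G(v_{i_j})$, and symmetrically a non-target neighbor $d$ of $v_{i_{j'}}$. If some choice of $(b,d)$ has $b\ne d$ and $bd\notin E(G)$, the 2-switch $\{v_{i_j}b,v_{i_{j'}}d\}\to\{v_{i_j}v_{i_{j'}},bd\}$ adds $e\in E^*$ (and possibly $bd$) while removing only non-target edges, strictly increasing $|E(G)\cap E^*|$ and contradicting maximality.

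The remaining hard case is when every such pair $(b,d)$ has $b=d$ or $bd\in E(G)$. Here one exploits the hypothesis that $\pi$ has at least $2M^2+k$ positive terms: $W$ contains at least $2M^2$ positive-degree vertices, while each of $N_G(v_{i_j}),N_G(b),N_G(d)$ has size at most $M$. A double-counting argument produces a positive-degree $x\in W$ disjoint from $N_G(v_{i_j})\cup N_G(b)\cup N_G(d)\cup\{v_{i_j},b,d\}$ together with a neighbor $y$ of $x$ lying outside $N_G(b)\cup\{v_{i_j},b,d,v_{i_{j'}}\}$; the ``bad'' candidates (all of whose neighbors lie inside this $O(M)$-size forbidden set) contribute at most $O(M)\cdot M$ to the count, comfortably absorbed by the quadratic hypothesis. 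Now perform the intermediate 2-switch $\{v_{i_j}b,xy\}\to\{v_{i_j}x,by\}$, which relocates $v_{i_j}$'s non-target neighbor from $b$ out into $W$ without destroying any $E^*$ edge (and if $by$ happens to lie in $E^*$, this swap alone already increases $|E\cap E^*|$, a contradiction). Follow this with the 2-switch $\{v_{i_j}x,v_{i_{j'}}d\}\to\{v_{i_j}v_{i_{j'}},xd\}$, valid since $xd\notin E(G)$ by the choice of $x$; this inserts $e$ into $E^*\cap E(G)$ without destroying any target, again contradicting maximality.

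The main obstacle is precisely the hard case: ensuring the intermediate 2-switch can always be executed no matter how restrictively the non-target neighborhoods of $v_{i_j}$ and $v_{i_{j'}}$ are arranged. The quadratic bound $2M^2+k$ is exactly what makes the abundance of positive-degree vertices in $W$ dominate the product of the degree bound $M$ with the sizes of the various excluded neighborhoods, guaranteeing a suitable auxiliary vertex $x$ for the intermediate switch.
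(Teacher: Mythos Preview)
Your plan is essentially the paper's proof: maximize $|E(G)\cap E^*|$ over realizations, locate non-target neighbours $b,d$ of the two endpoints of a missing target edge, and use the $2M^2$ surplus of positive-degree vertices outside $V'$ to find an auxiliary edge for a swap that inserts the target edge. The one substantive difference is in how the swap is executed. The paper does a single $3$-for-$3$ exchange: it finds a vertex $w\notin V'$ at distance at least $3$ from \emph{both} $b$ and $d$ (so any neighbour $x$ of $w$ is nonadjacent to both), and then trades $\{v_{i_j}b,\,v_{i_{j'}}d,\,wx\}$ for $\{v_{i_j}v_{i_{j'}},\,bw,\,dx\}$ in one step. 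The count is then immediate: at most $M^2+1$ vertices lie within distance $2$ of $b$, likewise for $d$, and at most $k-2$ further vertices of $V'$ are excluded, matching the hypothesis $2M^2+k$ exactly.

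Your decomposition into two successive $2$-switches has the same net effect, but the intermediate state forces the extra constraint $x\notin N(v_{i_j})$ (so that $v_{i_j}x$ can be temporarily inserted), and your asymmetric conditions on $x$ and $y$ make the exclusion sets add up to roughly $3M + (M+2)M$ rather than $2(M^2+1)$. The phrase ``comfortably absorbed by the quadratic hypothesis'' then hides a shortfall: for small $M$ (say $M\le 5$) the crude count $2M^2-(3M+1)-(M+2)M$ is negative, so as written the argument does not recover the stated bound $2M^2+k$ uniformly. This is not a fatal flaw---either handle small $M$ separately or, better, collapse your two swaps into the single $3$-swap and use the distance-$3$ criterion as the paper does, which eliminates the extraneous constraint and makes the arithmetic land exactly.
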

\begin{pf}
We may assume that $d_{i_j} = d_j$ for all $j$ and also that $n\ge 2M^2 + k$ and $d_n\ge 1$.  First note that if $M=1$, then $H$ must be a set of disjoint edges and isolated vertices, and $\pi$ is potentially $H$-graphic as long as $n \geq k$.  We therefore assume $M \geq 2$.

Let $V(H) = \{u_1, \ldots, u_k\}$, with the vertices in nonincreasing order by degree. In a realization $G$ of $\pi$, let $S = \{v_1, \ldots, v_k\}$ be the vertices with the $k$ highest degrees (in order) and let $H_S$ be the graph with vertex set $S$ and $v_i v_j \in E(H_S)$ if and only if $u_i u_j \in E(H)$. If all of the edges of $H_S$ are in $G$, then $H_S$ is a subgraph of $G$ that is isomorphic to $H$.

Assume now that $G$ is a realization of $\pi$ that maximizes $|E(H_S) \cap E(G)|$, but this quantity is less than  $|E(H_S)|$. Thus, there exist $v_i, v_j \in V(G)$ such that $v_i v_j \notin E(G)$ but $v_i v_j \in E(H_S)$. Since $\pi$ is degree sufficient for $H$, it follows that $v_i$ and $v_j$ must each have a neighbor, say $a_i$ and $a_j$, respectively, such that $v_i a_i, v_j a_j \notin E(H_S)$ but $v_i a_i, v_j a_j \in E(G)$. Note that possibly $a_i = a_j$.

Since the maximum degree in $G$ is $M$, there are at most $M^2+1$ vertices at distance at most 2 from $a_i$, and at most $M^2+1$ vertices at distance at most 2 from $a_j$. Since $a_i$ and $a_j$ have distinct neighbors in $S$, there are at most $k-2$ vertices in $S$ that are distance at least 3 from both $a_i$ and $a_j$. Therefore, there is a vertex $w$ in $V(G) \backslash S$ that is distance at least 3 from both $a_i$ and $a_j$. Let $x$ be a neighbor of $w$; consequently $x$ is not adjacent to $a_i$ or $a_j$, and $xw \notin E(H_S)$.
Exchanging the edges $v_i a_i, v_j a_j$, and $wx$ for the non-edges $v_i v_j, a_i w$, and $a_j x$ yields a realization $G'$ of $\pi$ such that $|E(H_S) \cap E(G')| > |E(H_S) \cap E(G)|$, contradicting the maximality of $G$.
\end{pf}

Finally, the following theorem of Li and Yin gives useful sufficient conditions for a degree sequence to be potentially $K_k$-graphic. We will use this repeatedly in our proofs of Theorems \ref{thm:NotDegSuff} and \ref{thm:DegSuff}.

\begin{thm}[Li and Yin, \cite{LiYin}]\label{thm:YinLi}
Let $\pi = (d_1, \ldots, d_n)$ be a nonincreasing graphic sequence and let $k$ be a positive integer.
\begin{itemize}
\item[(a)] If $d_k \geq k-1$ and $d_i \geq 2(k-1) -i$ for $1 \leq i \leq k-2$, then $\pi$ is potentially $K_k$-graphic.
\item[(b)] If $d_k \geq k-1$ and $d_{2k} \geq k-2$, then $\pi$ is potentially $K_k$-graphic.
\end{itemize}
\end{thm}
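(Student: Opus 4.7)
The plan is to prove both parts of Theorem~\ref{thm:YinLi} via an edge-switching argument applied to a carefully chosen realization of $\pi$. First I would verify that $\pi$ is graphic under either set of hypotheses by checking the Erd\H{o}s--Gallai inequality~\eqref{eq:ErdGal} in Theorem~\ref{thm:ErdGal}; the lower bounds on $d_1, \ldots, d_k$ (and in case~(b) the extra bound $d_{2k} \geq k-2$) provide enough contribution to the right-hand side for each $p$ that verification is routine. Let $V_k = \{v_1, \ldots, v_k\}$ be the vertices carrying the top $k$ degrees, and among all realizations of $\pi$ fix $G$ maximizing $|E(G[V_k])|$. The conclusion then reduces to showing $G[V_k] \cong K_k$.

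Suppose not: let $v_i, v_j \in V_k$ with $v_i v_j \notin E(G)$. Since $d(v_i), d(v_j) \geq d_k \geq k-1$ while each has at most $k-2$ neighbors inside $V_k$, each has at least one neighbor outside $V_k$. Set $A := N(v_i) \setminus V_k$ and $B := N(v_j) \setminus V_k$. If there exist $a \in A$ and $b \in B$ with $a \neq b$ and $ab \notin E(G)$, the 2-switch removing $\{v_i a, v_j b\}$ and inserting $\{v_i v_j, ab\}$ preserves the degree sequence and strictly increases $|E(G[V_k])|$, contradicting the extremality of $G$. The only obstruction, then, is that every candidate pair $(a,b)$ fails, forcing either $A \cap B$ to be large or the bipartite subgraph between $A \setminus B$ and $B \setminus A$ to be essentially complete.

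The remaining work is to show that this forced structure is incompatible with the degree hypotheses. For~(a), the bounds $d_i \geq 2(k-1) - i$ on the top $k-2$ degrees imply that $|A|$ and $|B|$ are large; counting edges in $G[A \cup B]$ under the forced structure then exceeds the total degree available to those vertices, a contradiction. For~(b), the condition $d_{2k} \geq k-2$ supplies $k$ reserve vertices $v_{k+1}, \ldots, v_{2k}$ of degree at least $k-2$; when the 2-switch is blocked we upgrade to a 3-switch involving one of these reserves, e.g., removing $\{v_i a, v_j b, v_\ell x\}$ and inserting $\{v_i v_j, v_\ell a, xb\}$ for an appropriate $v_\ell \in \{v_{k+1}, \ldots, v_{2k}\}$ with $v_\ell a \notin E(G)$ and a neighbor $x$ of $v_\ell$ with $xb \notin E(G)$. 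The main obstacle I anticipate is the case analysis for part~(b): ensuring via pigeonhole over the $k$ reserve vertices that a suitable $v_\ell$ and $x$ exist with all six of $v_i, v_j, a, b, v_\ell, x$ distinct and the three exchanged non-edges genuinely absent requires precise exploitation of $d_{2k} \geq k-2$, together with the observation that the blocked configuration already forces strong constraints on how $a, b$ can meet the reserve set.
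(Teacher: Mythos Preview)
The paper does not prove Theorem~\ref{thm:YinLi}: it is quoted from Li and Yin~\cite{LiYin} as a preliminary tool and is used without argument. There is therefore no proof in the paper to compare your proposal against.

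A few remarks on the proposal itself. First, the opening step is superfluous: the statement already hypothesizes that $\pi$ is graphic, so there is nothing to verify via Theorem~\ref{thm:ErdGal}. Second, the edge-switching framework you outline (maximize $|E(G[V_k])|$ over realizations, then switch on a missing edge) is indeed the standard engine behind results of this type, and in spirit it is close to what Li and Yin do. However, the crucial step in~(a) is left as ``counting edges in $G[A\cup B]$ \ldots\ exceeds the total degree available,'' and this is where the argument needs real work: the blocked configuration only tells you that every pair $a\in A$, $b\in B$ with $a\neq b$ is adjacent, and turning that into a contradiction with the degree hypotheses $d_i\ge 2(k-1)-i$ requires either a careful induction on $k$ (which is how the original proof proceeds) or a sharper structural argument than a single global edge count. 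In particular, when $i$ or $j$ equals $k-1$ or $k$ you only know $|A|,|B|\ge 1$, which is too weak for a naive double count. For~(b), your 3-switch idea using a reserve vertex from $\{v_{k+1},\dots,v_{2k}\}$ is the right instinct, and you correctly flag the pigeonhole bookkeeping as the obstacle; this part of your sketch is closer to a workable plan, though the distinctness and non-edge conditions do require the full strength of $d_{2k}\ge k-2$ and a case split on how $A$, $B$, and the reserve neighborhoods overlap.
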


\section{Sharpness}\label{sharpness}

Prior to proving Theorems \ref{thm:NotDegSuff} and \ref{thm:DegSuff}, we will discuss the sharpness of these results.

The number of terms leveled off in Theorem \ref{thm:NotDegSuff} is best possible in light of the following example. Let $H = K_{k-r} \vee \overline{K}_r$, where $r$ is at least 2, and for $1\le j < \alpha(H)+1$ let
$$\pi_j = \left(\left(\frac{n}{k+1}\right)^{k-j}, (k-r-1)^{n-k+j}\right)$$
where $n$ is sufficiently large. If the sum of $\pi_j$ is even, then $\pi_j$ is graphic; if the sum is odd, then reducing the last term by 1 yields a graphic sequence.  Clearly $\pi_j$ is not degree sufficient for $H$.

Note that the $(k-j+1)^{\rm st}$ term of $\pi_j$ is the first place that degree sufficiency for $H$ fails. Since $k-r-1 \leq k-2$, the last $n-k+j$ terms of $\pi_j$ are termwise dominated by the last $n-k+j$ terms of $\wtilpi_{\alpha(H)+1}(H,n)$.  However, $\pi_j$ has $k-j$ large terms, of which only the first $k- \alpha(H)-1$ are dominated by  $\wtilpi_{\alpha(H)+1}(H,n)$.  Therefore, we need to reduce terms $d_{k-\alpha(H)}, \ldots, d_{k-j}$ of $\pi_j$ to $d_{k-j+1} = k-r-1$, and each of these reductions is on the order of $n$. This yields a sequence that is majorized by $\wtilpi_{\alpha(H)+1}(H,n)$, but reducing any smaller number of terms would not suffice.



To evaluate the sharpness of Theorem \ref{thm:DegSuff}, we consider $H=K_k$.
Let
\[
\pi_k=\left(n-1,(2k-5)^{2k-3},1^{n-2k+2}\right)
\]
The Erd\H os-Gallai criteria show that $\pi_k$ is graphic;  clearly $\pi_k$ is degree sufficient for $K_{k}$.
Observe that $\pi_k$ is potentially $K_k$-graphic if and only if the sequence $\pi_k' = \left((2k-6)^{2k-3}\right)$, obtained by performing the Havel-Hakimi algorithm, is potentially $K_{k-1}$-graphic.
However, the complement of any realization of $\pi_k'$ is a $2$-regular graph, so the maximum size of a clique in any realization of $\pi_k'$ is at most $k-2$.
Hence $\pi_k$ is not potentially $K_{k}$-graphic.

Since $2i - \nabla_i(K_{k}) = i+1$ for each $i \in \{2, \ldots, k\}$, we have $i^*=2$.
Note that $\wtilpi_2(K_{k},n) = ((n-1)^{k-2}, (k-2)^{n-k+3})$.
Leveling off terms $k-i^*+1=k-1$ through $k$ of $\pi_k'$ does not change the sequence.
However, each entry from $k-1$ through $2k-2$ is larger than $k-2$, so we need to reduce each of these entries by $k-3$, for a total of $k(k-3)$ editing.
Thus we perform a total of $O(\alpha k^2)$ editing.

\section{Proofs of Theorems \ref{thm:NotDegSuff} and \ref{thm:DegSuff}}

\noindent {\bf Proof of Theorem \ref{thm:NotDegSuff}.}
First note that for each $ i \ge \alpha(H)+1$, $h_{k-i+1} \le k - i + \nabla_i(H)$
Otherwise, every $i$-vertex induced subgraph of $H$ has maximum degree greater than $\nabla_i(H)$, contradicting the definition of $\nabla_i(H)$.
%

Similarly, if $i <\alpha(H)+1$, then $h_{k-i+1} \leq k-\alpha(H)$.
Otherwise, at most $i-1<\alpha(H)$ vertices have degree at most $k-\alpha(H)$, while there are at least $\alpha(H)$ vertices in $H$ with degree at most $k-\alpha(H)$.

Recall that $j$ is the largest integer for which $d_{k-j+1} < h_{k-j+1}$.
First suppose that $j\ge \alpha(H)+ 1$.
In this case, we show that $\pi$ is majorized by $\wtilpi_j(H,n)$.
Clearly, the first $k-j$ terms of $\pi$ are majorized by the first $k-j$ terms of $\wtilpi_j(H,n)$.
As $d_{k-j+1}<h_{k-j+1} \leq k-j + \nabla_j(H)$, the remaining terms of $\pi$ are at most $k-j+\nabla_j(H) -1$. Thus, $\pi$ is majorized by $\wtilpi_j(H,n)$.

Now suppose that $j <\alpha(H)+1$. Here we show that $\pi$ is $([k-\alpha(H), k-j+1], 0)$-close to $\wtilpi_{\alpha(H)+1}(H,n)$. We know that $d_{k-j+1} < h_{k-j+1} \leq k - \alpha(H)$. Since $k-\alpha(H) + \nabla_{\alpha(H)+1}-2 \geq k-\alpha(H)-1$, reducing terms $d_{k-\alpha(H)}$ through $d_{k-j}$ of $\pi$ to $d_{k-j+1}$ results in a sequence that is majorized by $\wtilpi_{\alpha(H)+1}(H,n)$. \hfill $\Box$~\\

For the proof of Theorem~\ref{thm:DegSuff}, we prove a more technical result that follows below.
First we define some terminology that is used in the proof.

Given a graphic sequence $\pi$ that is degree sufficient for $K_r \vee \overline{K}_{k-r}$, we will create a sequence $\pi^w$, called the \textit{want sequence of $\pi$ for $K_r \vee \overline{K}_{k-r}$}. Begin by finding a realization $G$ of $\pi$ on the vertices $\{v_1, \ldots, v_n \}$ with $d(v_i) = d_i$ that maximizes the sum of (a) the number of edges amongst $v_1, \ldots, v_r$ and (b) the number of edges joining $\{v_1, \ldots, v_r\}$ and $\{v_{r+1}, \ldots, v_{k} \}$. Let $G_r = G[v_{r+1}, \ldots, v_n]$, and let $\pi_0^w = (w_{r+1}, \ldots, w_n)$ be the degree sequence of $G_r$, indexed so that $w_i = d_{G_r}(v_i)$.

For each $v_i$ with $i \leq r$, we want $v_i$ to be adjacent to each of the vertices in the set $S_i = \{v_1, \ldots, v_{k}\} \backslash \{v_i\}$.
Since $\pi$ is degree sufficient for $K_r \vee \overline{K}_{k-r}$, we see that for each nonneighbor of $v_i$ in $S_i$, there is a neighbor of $v_i$ in $\{v_{k+1}, \ldots, v_n\}$, and each of these neighbors is distinct.
Let $W_i$ be a subset of $N_{G_r}(v_i) \cap \{v_{k+1}, \ldots, v_n\}$ that has size $k-1 - d_{S_i}(v_i)$. Let $W$ be the multiset $\cup_{i=1}^k W_i$.

To create the want sequence from $\pi_0^w$, we make the following modifications. Each time the vertex $v_y$ appears in $W$, add 1 to entry $w_y$ of $\pi_0^w$. For each $j$ with $r+1 \leq j \leq k$, subtract $r-d_{\{v_1, \ldots, v_r\} }(v_j)$ from $w_j$. The sequence that results from these modifications is the want sequence, $\pi^w$. Note that the largest value that can be subtracted from any entry is $r$, and since $\pi$ is degree-sufficient for $K_r \vee \overline{K}_{k-r}$ and the only entries that might be reduced are those with index at most $k$, no entry of $\pi^w$ is negative and at most $r$ terms of $\pi^w$ are $0$. The largest value that will be added to any entry of $\pi_0^w$ is at most $r$, and only terms with index at least $k+1$ are increased, so the largest entry of $\pi^w$ is at most the maximum of $w_{r+1}$ and $w_{k+1} + r$.

If $\pi^w$ is graphic, we can find a realization of $\pi$ that contains $K_r \vee \overline{K}_{k-r}$.
To do this, take the union of the complete split graph $K_r \vee \overline{K}_{k-r}$ on the vertices $\{v_1, \ldots, v_k\}$ (with the clique on the vertex set $\{v_1\ldots,v_r\}$) and a realization of $\pi^w$ on the vertices $\{v_{r+1}, \ldots, v_n\}$.
Then join each vertex belonging to the clique of the complete split graph (that is, $v_i$ such that $i \leq r$) to the vertices in $N(v_i) \cap \{v_{k+1}, \ldots, v_n\} \backslash W_i$. This graph has degree sequence $\pi$, so we have a realization of $\pi$ that contains the desired complete split graph.

We will prove the following, more specific result than that stated in Theorem \ref{thm:DegSuff}.

\begin{thm}\label{thm:DegSuffNab}
Let $H$ be a fixed graph of order $k$ with at least one non-trivial component. If $\pi$ is a graphic sequence of length $n$ that is degree sufficient for $H$ but not potentially $H$-graphic, then $\pi$ is
\[
([k-i^*+1,k], k^2+k i^* + 2 + (6k^2+(i^*-\nabla_{i^*}(H))^2k+\nabla_{i^*}(H))(i^*-\nabla_{i^*}(H)-2))\text{-close}
\]
to $\wtilpi_{i^*}(H,n)$.
\end{thm}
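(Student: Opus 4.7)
Let $i=i^*(H)$, $\nabla=\nabla_{i^*}(H)$, and $T=k-i+\nabla-1$, so $\wtilpi_{i^*}(H,n)=((n-1)^{k-i},T^{n-k+i})$. Fix an induced subgraph $F\le H$ with $|V(F)|=i$ and $\Delta(F)=\nabla$; since $H\subseteq K_{k-i}\vee F$, it is enough to show that if no realization of $\pi$ contains $K_{k-i}\vee F$, then the total ``excess''
\[
\mathcal E(\pi):=i\cdot\max(0,d_k-T)+\sum_{j=k+1}^{n}\max(0,d_j-T)
\]
is at most the editing budget stated in the theorem, because $\mathcal E(\pi)$ is exactly the minimum amount of editing required after leveling off positions $k-i+1,\ldots,k$ to the value $d_k$. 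The plan is to argue by contradiction: assume $\mathcal E(\pi)$ exceeds this budget, and construct a realization of $\pi$ containing $K_{k-i}\vee F\supseteq H$.

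The first reduction is to bound $d_k\le k-2$. If $d_k\ge k-1$ and $d_{2k}\ge k-2$, Theorem~\ref{thm:YinLi}(b) produces $K_k$ (hence $H$) in a realization of $\pi$; if instead $d_{2k}\le k-3$, only $O(k)$ tail positions have $d_j\ge k-2$, and I can remove those and handle the remainder via Lemma~\ref{biclique}. With $d_k\le k-2$ the leveled contribution to $\mathcal E(\pi)$ is at most $i(i-\nabla-1)$, which fits inside the additive $k^2+ki+2$ part of the budget. The bulk of the proof is bounding the tail contribution by showing, for each relevant $v$ in $\{T+1,\ldots,k-2\}$, that $m_v:=|\{j>k:d_j\ge v\}|\le 6k^2+(i-\nabla)^2k+\nabla$; summing over the $i-\nabla-2$ admissible values of $v$ then yields the multiplicative term. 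For the per-$v$ bound, if $m_v$ were larger, then $\pi$ has many tail vertices of degree at least $v\ge k-i+\nabla$, so Lemma~\ref{biclique} with $r=i$ produces a realization of $\pi$ containing $K_{k-i,i}$ on the top $k-i$ indices and $i$ chosen tail vertices of degree at least $v$. The gap hypothesis $d_{k-i}-d_k\ge i(k+2)$ needed for Lemma~\ref{biclique} must be verified separately; if it fails, then all of $d_1,\ldots,d_{k-i}$ are within $O(k^2)$ of $d_k$, and applying Lemma~\ref{lem:Hgraph} to the truncated sequence with $M=d_k+O(k^2)$ already yields a realization containing $H$.

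The main obstacle is converting this biclique $K_{k-i,i}$ into the richer join $K_{k-i}\vee F$. This requires a carefully choreographed sequence of 2-switches that simultaneously (i) completes the large side into a clique $K_{k-i}$, using the $\gg k^2$ spare neighbors available at each top vertex under the gap hypothesis, and (ii) installs the edges of $F$ among the $i$ small-side vertices, each such switch exchanging a non-edge inside the small side for an $F$-edge together with a compensating swap among tail vertices of degree at least $v$ whose abundance is guaranteed by the assumed lower bound on $m_v$. The bookkeeping for this step is a generalization of the want-sequence construction introduced just before the theorem (which handles the special case $F=\overline K_i$); here one must further install $F$-edges on the small side while simultaneously completing the large side, with the constraint that no intermediate swap disturbs either a biclique edge or a previously placed $F$-edge. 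This interplay is exactly what drives the precise coefficients $6k^2+(i-\nabla)^2k+\nabla$ and $k^2+ki+2$ in the editing budget, and once the bookkeeping closes, the resulting realization of $\pi$ contains $K_{k-i}\vee F\supseteq H$, contradicting the hypothesis.
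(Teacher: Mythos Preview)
Your overall strategy---assume the excess is too large and build a copy of $H$---matches the paper's, but two concrete steps break down, and a third choice makes the argument needlessly hard.

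First, when the gap hypothesis $d_{k-i^*}-d_k\ge i^*(k+2)$ fails, you assert that ``all of $d_1,\ldots,d_{k-i}$ are within $O(k^2)$ of $d_k$.'' That does not follow: the failed inequality only bounds $d_{k-i^*}$, and says nothing whatsoever about $d_1,\ldots,d_{k-i^*-1}$, which can be as large as $n-1$. Consequently there is no valid $M$ with which to invoke Lemma~\ref{lem:Hgraph} on $\pi$ itself. Even if you intend to truncate first, note that Lemma~\ref{lem:Hgraph} with $M=O(k^2)$ demands $2M^2+k=O(k^4)$ positive terms, whereas your excess hypothesis only guarantees roughly $f(H)=O(k^3)$ terms; the arithmetic does not close. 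The paper resolves this by first splitting on $t=\max\{j:d_j\ge k-1\}$. When $t<k-\ell^*$ it deletes the top $t$ vertices so that the residual sequence genuinely has maximum term at most $k-2$, and only then applies Lemma~\ref{lem:Hgraph} (now $2M^2+k\le 2(k-2)^2+k<6k^2$). When $t\ge k-\ell^*$ and the gap fails, the paper does not use Lemma~\ref{lem:Hgraph} at all; it bounds $d_{k-\ell^*+1}$ and verifies directly via Erd\H os--Gallai that the want sequence for $K_{k-\ell^*}\vee\overline{K}_{\ell^*}$ is graphic.

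Second, you target $K_{k-i^*}\vee F$ and accordingly call Lemma~\ref{biclique} with $r=i^*$. The paper instead targets $K_{k-\ell^*}\vee\overline{K}_{\ell^*}$ (recall $\ell^*=i^*-\nabla\le\alpha$, so this split graph already contains $H$) and uses $r=\ell^*$. This is not a minor preference: with the paper's target there are no $F$-edges to install at all, so the entire ``carefully choreographed sequence of 2-switches'' you sketch is unnecessary, and the want-sequence machinery from the preamble applies verbatim. Your per-level summation $\sum_v m_v$ is also replaced in the paper by a single threshold test on $d_{f(H)}$: either $d_{f(H)}\le k-\ell^*-1$, in which case editing stops by position $f(H)$ and the bound is immediate, or $d_{f(H)}\ge k-\ell^*$, in which case one builds the split graph and reaches a contradiction.
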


Since
\[
2i^*-\nabla_{i^*}(H) \le 2(\alpha(H)+1)-\nabla_{\alpha(H)+1}(H)\leq 2 \alpha(H)+1,
\]
and $i^*\ge \alpha(H)+1$, we see that $i^* - \nabla_{i^*}(H) \leq \alpha(H)$.
Thus, Theorem \ref{thm:DegSuff} follows directly from Theorem~\ref{thm:DegSuffNab}.\\

\begin{pf}
Let $\pi=(d_1, \ldots, d_n)$ and label the vertices of $H$ with $\{v_1, \ldots, v_k\}$ such that $d(v_i) \geq d(v_j)$ when $i < j$. To simplify notation, we will write $\alpha$ for $\alpha(H)$ and let $\ell^* = i^*-\nabla_{i^*}(H)$. Let $f(H) = 6k^2 + (\lst)^2 k +\nabla_{i^*}(H)$. With this notation, we prove that $\pi$ is $([k-i^*+1,k], k^2+k i^* + 2 + f(H)(\lst-2))\text{-close}$
to $\wtilpi_{i^*}(H,n)$.

If either $d_k\ge 2k-3$ or $d_{2k}\ge k-2$, then by Theorem \ref{thm:YinLi}, $\pi$ is potentially $K_k$-graphic.  As this would imply that $\pi$ is potentially $H$-graphic, we assume henceforth that $d_k\le 2k-4$ and, if $n\ge 2k$, that $d_{2k}\le k-3$.

We will break the proof into several cases. In Cases 1-3, we will show that, after reducing the value of terms $d_{k-i^*+1}, \ldots, d_{k-1}$ to $d_k$, we only require an editing of at most $k^2+ki^*+2+f(H)(\ell^*-2)$. In Case 4, no editing is required, and we show that $\pi$ is potentially $H$-graphic.\\

\case{Case 1} $n < 2k$. ~\\

In this case, after leveling off terms $d_{k-i^*+1}$ through $d_{k-1}$, we need to reduce at most $k+i^*$ terms of the sequence. Each of those terms is reduced by at most $k + \lst -3$, so the total amount of editing is at most $k^2 + k i^* + (\lst -3)(k+i^*)$.\\

\case{Case 2} $2k \leq n < f(H)$. ~\\

Reducing the terms $d_{k-i^*+1}, \ldots, d_{k-1}$ of $\pi$ to $d_k$ leaves at most $k-i^*$ terms that may be greater than $2k-4$.
Now in the resulting sequence, each of the terms from position $k-i^*+1$ to position $2k-1$ needs to be reduced by at most $k+\lst -3$ and each term from position $2k$ to the end of the sequence must be reduced by at most $\lst-2$. Thus the amount of editing required on this sequence is at most
\begin{equation*}\label{eq:editbd}
(k+i^*-1)(k+\lst -3)+(n-2k+1)(\lst-2) \leq (\lst -2)(f(H)-k+i^*) + (k-1)(k+i^*-1).
\end{equation*}
~\\

\case{Case 3} $n \geq f(H)$ and $d_{f(H)} \leq k- \lst - 1$.~\\

Here, we can stop editing after the term $d_{f(H)}$ since all subsequent terms are at most $k-\lst-1$.
We perform the same leveling-off step as in Case 2, so the amount of editing required is at most
\begin{equation*}
(k+i^*-1)(k+\lst-3)+(f(H)-2k+1)(\lst-2)=(\lst-2)(f(H)-k+i^*)+(k-1)(k+i^*-1).
\end{equation*}~\\

\case{Case 4} $n \geq f(H)$ and $d_{f(H)} > k-\lst-1$.\\

Now we show that $\pi$ is potentially $H$-graphic.
If $\lst = 1$, then $d_{f(H)} \geq k-1$, and since $f(H) \geq 2k$, this means $d_{2k} \geq k-1$. Thus by part (b) of Theorem \ref{thm:YinLi}, $\pi$ is potentially $K_k$-graphic. We assume henceforth that $\lst \geq 2$.

Let $t$ be such that $d_t \geq k-1$ but $d_{t+1} < k-1$. We then have two cases.\\

\case{Case 4a} $t < k - \lst$.\\

In this case, we wish to show that $\pi$ has a realization that contains the complete split graph $K_t \vee \overline{K}_{k-t}$.
First note that $\pi$ is degree sufficient for such a graph because $d_t \geq k-1$ and $d_k \geq k - \lst > t$. Let $\pi_1^w$ be the want sequence of $\pi$ for $K_t \vee \overline{K}_{k-t}$. Since every entry of $\pi_0^w$ is at most $k-2$ (because $d_{t+1} < k-1$), and $t$ is also at most $k-1$, the largest entry of $\pi_1^w$ is less than $2k$.

Zverovich and Zverovich \cite{ZZ} showed that a sequence with maximum term $r$ and minimum term $s$ is graphic as long as the length of the sequence is at least $\frac{(r+s+1)^2}{4s}$. Since $\pi_1^w$ has length $n-t$ and up to $t$ terms may be 0, $\pi_1^w$ is graphic if $n - 2t \geq (k+1)^2$. Since $t < k-\lst$, this is true if $n \geq k^2+4k+1 - 2\lst$. The observation that $n \geq f(H) > 6k^2$ shows that this is true, and $\pi_1^w$ is graphic.

Now observe that if $\pi_1^w$ is graphic, then $\pi$ is potentially $K_t \vee \overline{K}_{k-t}$-graphic.
If $t \geq k-\alpha(H)$, then this complete split graph contains a copy of $H$, and we are done. So we assume that $t < k - \alpha(H)$. Let $F_i$ denote an $i$-vertex induced subgraph of $H$ that achieves $\Delta(F_i) = \nabla_i(H)$.  If $\pi$ has a realization containing $K_t\vee \overline{K}_{k-t}$ with a copy of $F_{k-t}$ on the vertices in the independent set, then $\pi$ is potentially $H$-graphic.

By Lemma 2.1 of \cite{Yin}, we know that a realization $G$ of $\pi$ can be found that contains this split graph with the following property: the $t$ vertices of degree $k-1$ are on the $t$ highest-degree vertices of $G$, and the $k-t$ vertices of degree $t$ are on the next $k-t$ highest degree vertices of $G$.
Delete the $t$ vertices of highest degree in $G$, and let $\pi' = (d_1', \ldots, d_{n-t}')$ be the degree sequence of the resulting subgraph of $G$.
It follows that $\pi'$ satisfies $d_1' \leq k-2$ and $d_{f(H)-t}' \geq k-\lst-t \geq 1$. Since $\Delta(F_{k-t})=\nabla_{k-t}(H)$ and $d_{k-t}' \geq d_{f(H)-t}' \geq k-\lst \geq \nabla_{k-t}(H)$, it follows that $\pi'$ is degree sufficient for $F_{k-t}$. Applying Lemma \ref{lem:Hgraph} with $H = F_{k-t}$ and $M = k-2$, we see that $\pi'$ is potentially $F_{k-t}$-graphic as long as at least $2(k-2)^2 + (k-t)$ terms of $\pi'$ are positive. Since $f(H)-t \geq 6k^2$, there is a realization of $\pi'$ that contains a copy of $F_{k-t}$ on the highest degree vertices; overlapping this with the vertices $v_{t+1}, \ldots, v_k$ of $G$, we get a realization of $\pi$ containing $K_{t} \vee F_{k-t}$, which implies that $\pi$ is potentially $H$-graphic. \\

\case{Case 4b} $t \geq k - \lst$.\\

First, suppose $d_{k-\ell^*}-d_k \geq \lst(k+2)$.
This implies that $d_{k-\lst} \geq \lst (k + 2)+ d_k$.
Since $\lst \geq 2$, it follows that $d_{k-\lst} \geq 3k$.
We claim that this implies that $\pi$ is potentially $H$-graphic.

Since $f(H) > \lst (k + \lst +1)$ and $d_{f(H)} \geq k-\lst $, Lemma \ref{biclique} yields a realization $G$ of $\pi$ containing the complete bipartite graph $K_{k-\lst, \lst}$, where the vertices $\{v_1, \ldots, v_{k-\lst}\}$ form the partite set of order $k-\lst$.
Let $S$ be the set of vertices in the copy of $K_{k-\lst, \lst}$, let $S' = \{v_1, \ldots, v_{k-\lst}\}$, and let $R = V(G)\setminus S$.
If the vertices of $S'$ induce a complete graph, then $G$ contains $K_{k-\lst} \vee \overline{K}_{\lst}$; consequently $G$ contains a copy of $H$ since $k-\lst \geq k-\alpha$.
Suppose there are vertices $v_i$ and $v_j$ in $S'$ such that $v_iv_j\notin E(G)$.
Since $d_S(v_i) \leq k-2$ and $d(v_i) \geq 3k-2$, we know that $v_i$ has at least $2k$ neighbors in $R$.
Similarly, $v_j$ has at least $2k$ neighbors in $R$.
If each neighbor of $v_i$ in $R$ is adjacent to each neighbor of $v_j$ in $R$, then each of these vertices in $R$ has degree at least $2k-1$.
Since $v_i$ has at least $2k$ neighbors in $R$, it follows that $G$ has at least $2k$ vertices with degree at least $2k-1$, contradicting the assumption that $d_k\le 2k-4$.
Thus there are vertices $x$ and $y$ in $R$ such that $v_i x, v_j y\in E(G)$, and $x y\notin E(G)$.
Hence we can replace the edges $v_i x$ and $v_j y$ with the non-edges $xy$ and $v_i v_j$ to obtain a new realization of $\pi$.
Iteratively performing this process for each nonadjacent pair of vertices in $S'$ yields a realization of $\pi$ in which $S'$ induces a complete graph.
Consequently $\pi$ is potentially $H$-graphic.

Finally, we must consider the case where $d_{k-\lst} < d_k + \lst(k  + 2) < 2k + \lst(k + 2)$.
Observe that $\pi$ is degree sufficient for $K_{k-\lst} \vee \overline{K}_{\lst}$ since $t \geq k-\lst$ and $f(H) > k$.
Let $\pi_2^w= (g_1, \ldots, g_{n'})$ be the want sequence of $\pi$ for $K_{k-\lst} \vee \overline{K}_{\lst}$ where $n' = n-(k-\lst)$.
Since constructing the want sequence increases each term by at most $k-\lst$, the facts that $d_{k-\lst+1}<2k + \lst(k + 2)$, $d_{k}< 2k-3$, and $d_{2k}< k-1$  imply that $\pi_2^w$ has the following properties:
\begin{itemize}
\item $g_i < 3k + (k+1) \lst $ \hspace{1pt} for $1 \leq i \leq \lst$,
\item $g_i < 3k- \lst$ \hspace{36pt} for $\lst +1 \leq i \leq \lst + k$, and
\item $g_i < 2k-\lst $ \hspace{36pt} for $\lst + k + 1 \leq i \leq n'$.
\end{itemize}

\begin{cl}\label{cl:WantGraphic}
 The sequence $\pi_2^w$ is graphic.
\end{cl}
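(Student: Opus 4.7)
The plan is to verify that $\pi_2^w$ satisfies the Erd\H{o}s--Gallai criterion (Theorem~\ref{thm:ErdGal}). Setting $r = k - \lst$, an accounting from the definition of the want sequence gives $\sum_j w_j = 2|E(G_r)| + r(r-1) - 2e_{CC}$, where $e_{CC}$ is the number of edges of $G$ inside $\{v_1,\dots,v_r\}$; this is a sum of three even quantities, so the parity condition is satisfied.

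For the inequality $\sum_{i=1}^p g_i \le p(p-1) + \sum_{i=p+1}^{n'} \min\{g_i,p\}$, I would upper-bound the LHS using the three stripe bounds: by $pM_1$ for $p\le\lst$; by $\lst M_1+(p-\lst)M_2$ for $\lst<p\le\lst+k$; and by $\lst M_1+kM_2+(p-\lst-k)M_3$ for $p>\lst+k$, where $M_1=3k+(k+1)\lst$, $M_2=3k-\lst$, and $M_3=2k-\lst$. For the RHS, the fact (noted in the construction) that at most $r$ of the $g_i$ equal zero, combined with $\min\{g_i,p\}\ge 1$ whenever $g_i\ge 1$ and $p\ge 1$, yields
\[
\sum_{i=p+1}^{n'}\min\{g_i,p\}\ge n'-r-p
\]
for $1\le p\le n'-r$. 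For $p > n'-r$, since $M_1 \le k^2+4k$ while $n'-r \ge 6k^2$, one has $p \ge M_1+1$, so $p(p-1) \ge pM_1$ already dominates the LHS.

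Plugging the upper bounds in, the remaining step in each stripe reduces to a polynomial inequality in $k$, $\lst$, and $p$. The worst case is $p=\lst$, which requires $n'-r \ge 3k\lst + k\lst^2 + 2\lst$; since $n'-r = n - 2(k-\lst) \ge 6k^2 + \lst^2 k + \nabla_{i^*}(H) - 2k + 2\lst$, the $\lst^2 k$ summand in $f(H)$ exactly covers the dominant $k\lst^2$ term, and the $6k^2$ buffer absorbs the lower-order terms (one checks $6k^2 - 2k \ge 3k\lst$ for $\lst \le k$). Analogous polynomial checks succeed in the other two stripe regimes, with smaller leading constants. The main obstacle is simply the routine bookkeeping required to assemble these three stripe inequalities; the design of the quantity $f(H)$ in the statement of Theorem~\ref{thm:DegSuffNab}---in particular the coefficient $\lst^2$ on $k$---is tailored precisely so that the $p = \lst$ check goes through.
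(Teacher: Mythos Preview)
Your proposal is correct and follows essentially the same approach as the paper: verify Erd\H{o}s--Gallai by upper-bounding $\sum_{i=1}^p g_i$ in the three stripe ranges and lower-bounding the right side using the length $n'$. The only notable differences are that you explicitly check the parity condition (which the paper omits) and that you use the cruder lower bound $n'-r-p$ on the min-sum rather than the paper's refinement via the threshold index, and you handle large $p$ directly instead of invoking Tripathi--Vijay; all of these are minor variations that do not change the argument.
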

\noindent\textit{Proof of Claim 1.} Note that $n' = n-(k-\ell^*)>6k^2+(\ell^*)^2k$.

Tripathi and Vijay~\cite{TriVij} showed that the Erd\H{o}s-Gallai criteria (Theorem \ref{thm:ErdGal}) need only be checked for certain values of $p$: it suffices to check all $p \leq s$, where $s$ is the largest integer for which $d_s \geq s-1$, or to check only those values of $p$ for which $d_p$ is strictly greater than $d_{p+1}$.  We will use the Erd\H{o}s-Gallai criteria and this observation to show that $\pi_2^w$ is graphic.

Since $g_i < 2k-\lst$ for large enough $i$, we only need to check the inequalities for indices up to $2k$. We can write the right hand side of  \eqref{eq:ErdGal} as
\[
p(p-1) + \sum_{i=p+1}^r p + \sum_{i=r+1}^{n'} g_i,
\]
where $r \geq p+1$ is the largest index such that $g_r \geq p$ but $g_{r+1} < p$.
This then simplifies to
\begin{align*}
p(r-1) + \sum_{i=r+1}^{n'} d_i & \geq p(r-1) + n'-r  \\
	& =r(p-1) - p + n' \\
	& \geq (p+1)(p-1) - p + n' \\
	&  = p^2 - p -1 + n'
\end{align*}
So we need to show that $n' +p^2-p-1 \geq \sum_{i=1}^p g_i$ for each $p \leq 2k$.

First suppose $p \leq \lst$. Then $$\sum_{i=1}^p g_i < p(3k+\lst(k+1)) \leq \lst 3k +(\lst)^2(k+1).$$ Since $n' \geq 6k^2 + (\lst)^2 k$, the desired inequality holds.

If $\lst + 1 \leq p \leq \lst +k$, then $$\sum_{i=1}^p g_i \leq 3\lst k +(\lst)^2( k+1) + (p-\lst)(3k-\lst) \leq 2 \lst k + (\lst)^2 (k+1) + 3k^2.$$ For $p$ in this range, $$n'+p^2-p-1 \geq 6k^2 + (\lst)^2 k + (\lst + 1)^2 -1 = 6k^2 +(\lst)^2(k+1) + 2\lst,$$ so the inequality holds.

Finally, if $\lst+ k + 1 \leq p \leq 2k$, then $$\sum_{i=1}^p g_i \leq 2\lst k + (\lst)^2 (k+1) + 3k^2 +(p-\lst -k)(2k-\lst) \leq 5k^2 + (\lst)^2 k + 2(\lst)^2 - \lst k.$$ Now, $n' +p^2 - p - 1 > 6k^2 + (\lst)^2 k + 4k^2$, so the Erd\H{o}s-Gallai inequality is satisfied.

Thus Claim 1 is proved. \\

We can now use a realization of $\pi_2^w$ to create a copy of $K_{k-\lst} \vee \overline{K}_{\lst}$ in a realization of $\pi$.
Since $H \subseteq K_{k-\lst} \vee \overline{K}_{\lst}$, this implies that $\pi$ is potentially $H$-graphic.
\end{pf}

\subsection{Proof of Lemma \ref{biclique}}\label{section:biclique_proof}

Kleitman and Wang gave the following generalization of the graphicality criteria due independently to Havel and Hakimi \cite{Hak, Hav}.

\begin{thm}[Kleitman and Wang, \cite{KleitWang}]\label{thm:KW}
Let $\pi = (d_1, \ldots, d_n)$ be a nonincreasing sequence of nonnegative integers, and let $i \in [n]$. If $\pi_i$ is the sequence defined by
\[
\pi_i =
\begin{cases}
(d_1 - 1, \ldots, d_{d_i}-1, d_{d_i+1}, \ldots, d_{i-1}, d_{i+1}, \ldots, d_n) & \text{ if } d_i < i \\
(d_1-1, \ldots, d_{i-1}-1, d_{i+1}-1, \ldots, d_{d_i+1}-1, d_{d_i+2}, \ldots, d_n) & \text{ if } d_i \geq i,
\end{cases}
\]
then $\pi$ is graphic if and only if $\pi_i$ is graphic.
\end{thm}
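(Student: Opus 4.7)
The plan is to prove each direction separately. The "if" direction is a direct construction: given a realization $H$ of $\pi_i$ on the labeled vertex set $\{v_1,\ldots,v_n\}\setminus\{v_i\}$ with degrees matching $\pi_i$, I would adjoin a new vertex labeled $v_i$ and join it to exactly the $d_i$ vertices whose degrees are reduced by $1$ in constructing $\pi_i$, namely the set $T=\{v_1,\ldots,v_{d_i}\}$ if $d_i<i$ and $T=\{v_1,\ldots,v_{i-1},v_{i+1},\ldots,v_{d_i+1}\}$ if $d_i\ge i$. Every $v_j$ with $j\ne i$ then recovers its $\pi$-degree and $v_i$ has degree $d_i$, so the enlarged graph realizes $\pi$.

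For the "only if" direction, let $T$ be defined as above. Observe that in both cases $T$ is exactly the set of $d_i$ vertices of largest $\pi$-degree other than $v_i$, with ties broken in favor of smallest index. Among all pairs (realization $G$ of $\pi$, labeling of its vertices by $v_1,\ldots,v_n$ such that $d_G(v_j)=d_j$ for every $j$), choose one maximizing $|N_G(v_i)\cap T|$. If this maximum equals $d_i$, then $N_G(v_i)=T$ and deleting $v_i$ produces a graph whose degree sequence, after sorting, is precisely $\pi_i$, finishing the proof. So suppose to the contrary that the maximum is strictly less than $d_i$; then there exist $v_a\in T\setminus N_G(v_i)$ and $v_b\in N_G(v_i)\setminus T$, and by the definition of $T$ we have $d(v_a)\ge d(v_b)$.

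The plan from here is the standard 2-switch: find a vertex $v_c\notin\{v_i,v_a,v_b\}$ with $v_av_c\in E(G)$ and $v_bv_c\notin E(G)$ and perform the swap $\{v_iv_b,v_av_c\}\mapsto\{v_iv_a,v_bv_c\}$. This preserves all vertex degrees while strictly increasing $|N(v_i)\cap T|$ by one, contradicting the extremal choice of the pair. The existence of $v_c$ follows from $|N(v_a)\setminus N(v_b)|\ge 1$, which is an immediate consequence of $d(v_a)\ge d(v_b)$ together with $v_i\in N(v_b)\setminus N(v_a)$.

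The main obstacle is the degenerate possibility that the only vertex in $N(v_a)\setminus N(v_b)$ is $v_b$ itself, so that no admissible $v_c$ exists; a short counting check shows this can only occur when $d(v_a)=d(v_b)$ and $v_av_b\in E(G)$. To bypass this case I would use a relabeling trick instead of a swap: since $v_a$ and $v_b$ have equal degree, transposing their two labels yields another labeling of the same underlying graph $G$ that is still consistent with $\pi$. In the new labeling the vertex sitting at position $a$, which belongs to $T$, is precisely the old $v_b$, hence adjacent to $v_i$; thus $|N(v_i)\cap T|$ strictly increases, again contradicting the extremal choice. Either via a 2-switch or via a relabeling we therefore reach a contradiction, forcing $N_G(v_i)=T$ and completing both directions of the theorem.
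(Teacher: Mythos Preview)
The paper does not prove this theorem; it is quoted as a known result of Kleitman and Wang \cite{KleitWang} and used as a tool in the proof of Lemma~\ref{biclique}. Your argument is correct and is the standard proof: the ``if'' direction is the obvious construction, and the ``only if'' direction is the extremal $2$-switch argument, with the relabeling trick neatly disposing of the degenerate case $d(v_a)=d(v_b)$, $v_av_b\in E(G)$. One cosmetic remark: when you write that deleting $v_i$ yields a graph whose degree sequence ``after sorting'' is $\pi_i$, note that $\pi_i$ as stated need not be sorted; what you mean (and what suffices) is that the multiset of degrees of $G-v_i$ equals the multiset of entries of $\pi_i$.
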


Let $\pi'$ be the sequence resulting from sorting $\pi_i$ in nonincreasing order, and call $\pi'$ the \textit{residual sequence} obtained by {\it laying off $d_i$}.  Repeated application of Theorem \ref{thm:KW} yields an efficient algorithm to test for graphicality.  ~\\

\begin{proof}[Proof of Lemma \ref{biclique}.]~\\

\noindent\textbf{Idea of the proof:}
The proof of Lemma \ref{biclique} is based on a careful analysis of repeated applications of the Kleitman-Wang algorithm (Theorem \ref{thm:KW}).  Observe that when laying off a term $d_i$ from a graphic sequence, the $d_i$ terms of highest degree, aside from $d_i$, are each reduced by 1.  If there are many terms of the same value that will be reduced, the order in which these reductions occur does not matter.  In particular, provided we reduce the correct number of terms, we may reduce any of the terms equal to $d_{d_i}$ and will get the same residual sequence.   This fact is the key to constructing a realization of $\pi$ that contains $K_{k-r,r}$, as referenced in the statement of Lemma \ref{biclique}.

Kleitman-Wang provides a means by which this realization can be constructed on the vertex set $V=\{v_1,\ldots,v_n\}$ so that the vertices $v_j$ have degree $d_j$ for $j=1,\ldots,n$.
The vertex $v_j$ is associated with the $j$th term in $\pi$.  When $d_j$ is laid off, the resulting sequence, $\pi'$, is a graphic sequence.
We can use it to construct a graph on $V-\{v_j\}$ with (the reordered) $\pi'$ as its degree sequence.
The vertex $v_j$ is then added, adjacent to the first $d_j$ members of $\{v_1,\ldots,v_{j-1},v_{j+1},\ldots,v_n\}$.
In this way, when we lay off a term $d_j$ of $\pi$, we will say that the vertices associated with the terms that are reduced are assigned to the neighborhood of $v_j$.
Repeating this process, we will create the desired realization of $K_{k-r,r}$.

 The problem with this procedure is that applying it more than once requires that each of the degree sequences must be reordered, which makes keeping track of the vertices that are assigned to a particular neighborhood difficult.

For clarity, we will often abuse terminology and say we lay off vertex $v_j$ to mean we lay off the term of $\pi$ whose value corresponds to the degree of $v_j$.
This makes sense when we think about laying off a term $d_j$ of $\pi$ as assigning a set of vertices to the neighborhood of $v_j$.
We will lay off at most $r(k+r+1)$ vertices with the aim of obtaining just $r$ of them whose neighborhood contains $\{v_1,\ldots,v_{k-r}\}$. Our parameters are chosen just for this purpose.  The entries in $\{d_{k+1},\ldots,d_n\}$ that have value in $\{k-r,\ldots,k-1\}$ will be the candidates for entries to lay off.  Because $d_{k-r}-d_{k}\geq r(k+2)$, we can guarantee that, for each of the degree sequences that result from the laying off, the entries that correspond to $v_1,\ldots,v_{k-r}$ will always stay within the first $k-1$ entries.~\\

\noindent\textbf{Terms and definitions:} Now we proceed to prove that the procedure outlined above does indeed produce the graph we want.
We will lay off entries of $\pi$ corresponding to vertices $v_{a_1},v_{a_2},\ldots,v_{a_p},\ldots$, where $v_{a_p}$ will be determined at step $p$.
Let $V_0=V$ and for $p=1,2,\ldots$, let $V_p=V_{p-1}-v_{a_p}$.
The neighborhood we assign to $v_{a_p}$, which we will call $N_p$, is a subset of $V_p$.
We will call the process of laying off $v_{a_1},\ldots,v_{a_{r(k+2)}}$ the Laying-off Algorithm.

For $p=0,1,2,\ldots$, we define $\hatd_p(v_i)$ to be the \textit{remaining degree} of $v_i$ after $v_{a_1},\ldots,v_{a_p}$ are laid off.
That is, for every vertex $v_i$, $\hatd_0(v_i)=d_i$ and for $p=1,2,\ldots,r(k+2)$, we have $\hatd_p(v_i):=d_i-\left|\{N_j\colon 1\le j\le p\text{ and }v_i\in N_j\}\right|$. Iteratively,
\begin{equation*}\label{eq:dhat}
\hatd_p(v_i) = \begin{cases}
                  \hatd_{p-1}(v_i)   & \text{ if }v_i\notin N_p \\
                  \hatd_{p-1}(v_i)-1 & \text{ if }v_i\in N_p.
               \end{cases}
\end{equation*}
To determine which vertex $v_{a_p}$ to lay off, for $p =1,2,\ldots$, we define $S_{p-1}\subset V_{p-1}$ to be the set of all vertices $w\in V_{p-1}$ for which $\hatd_{p-1}(w)\in\{k-r,\ldots,k-1\}$.
Then choose $v_{a_p}$ to be a vertex in $S_{p-1}$ for which $\hatd_{p-1}(v_{a_p})$ is minimum.
Let $\ell_{p-1}= \hat d_{p-1}(v_{a_p})$; this is the number of vertices that will be assigned to $N_p$.
Note that the neighborhood of $v_{a_p}$ may not consist solely of the vertices in $N_p$.
In particular, if $\hat d_{p-1}(v_{a_p}) < \hat d_0(v_{a_p})$, then $v_{a_p}$ is in $N_{p'}$ for some $p'<p$. Thus, the neighborhood of $v_{a_p}$ in our final graph contains $v_{a_{p'}}$ although $v_{a_{p'}}$ is not in $N_p$.

The natural ordering on $V=V_0$ is simply $(v_1,\ldots,v_n)$.
This corresponds to the nonincreasing order of $\pi$.
We say that $v_i$ \textit{naturally precedes} $v_j$ if $i<j$, and will write $v_i\propto v_j$.
We will define $\pi_p$ to be the sequence given by each $\hatd_p(v_i)$, for all $v_i\in V_p$, that is nonincreasing and, when equality holds, to obey the natural ordering.
That is, $\hatd_p(v_i)$ precedes $\hatd_p(v_j)$ in $\pi_p$ if either (a) $\hatd_p(v_i)>\hatd_p(v_j)$, or (b) $\hatd_p(v_i)=\hatd_p(v_j)$ and $i<j$.
This is simply the degree sequence obtained from $\pi$ by $p$ iterations of the Kleitman-Wang algorithm; thus, $\pi_p$ is graphic.

Observe that in defining $\pi_p$, we have prescribed the order of the terms based on the vertices with which they are associated.
This is because we need to keep track of not only the remaining degree of a vertex but also the position of that vertex in $\pi_p$.
To make this precise, let $\tau_p$ be a function from $\{1,\ldots,|V_p|\}\rightarrow V_p$ in which $\tau_p(j)$ is the vertex in the $j$th position in the order defined by $\pi_p$, and let $T_p$ be the sequence $\tau_p(1), \ldots, \tau_p(n-p)$.
Thus, $T_p$ is simply the sequence of vertices of $V_p$, ordered according to the position of their remaining degree in $\pi_p$.
A subsequence $\tau_p(b_1),\ldots,\tau_p(b_m)$ of $T_p$ is \textit{consistent} if $\tau_p(b_i)\propto\tau_p(b_j)$ for all $b_i<b_j$. In essence, this means that all vertices in the subsequence are in order by index, from lowest to highest. We say that $T_p$ itself is consistent if $\tau_p(1),\ldots,\tau_p(n-p)$ is consistent.


In the Kleitman-Wang algorithm, when the term $d_i$ is laid off it is first removed from the sequence; then the first $d_i$ terms of the resulting sequence are each reduced by one.
To incorporate this into the Laying-off Algorithm, we define $\hat{\pi}_{p-1}$ to be $\pi_{p-1}$ with the term associated with $v_{a_p}$ removed. So, $\htau_{p-1}$ and $\hat T_{p-1}$ are the corresponding order function and sequence of vertices. ~\\

\noindent\textbf{Finding the ``neighborhoods'' $N_p$:}
Now we can describe our modification of the Kleitman-Wang algorithm more precisely.
At step $p$ of the Laying-off Algorithm, we choose $N_p$ in the following way:
\begin{enumerate}
   \item If $T_{p-1}$ is consistent, then simply let $N_p$ be the first $\ell_{p-1}$ vertices in $\hat T_{p-1}$.

   \item If $T_{p-1}$ is not consistent but $\hatd_{p-1}(\htau_{p-1}(\ell_{p-1}))>\hatd_{p-1}(\htau_{p-1}(\ell_{p-1}+1))$, then we again let $N_p$ consist of the first $\ell_{p-1}$ vertices in $T_{p-1}$.

   \item If $T_{p-1}$ is not consistent but $\hatd_{p-1}(\htau_{p-1}(\ell_{p-1}))=\hatd_{p-1}(\htau_{p-1}(\ell_{p-1}+1))$, then $N_p$ consists of all vertices $w \in V_{p-1}$ for which $\hat d_{p-1}(w) > \hat d_{p-1}(\htau_{p-1}(\ell_{p-1}))$, and the vertices $x$ with the highest index for which $\hat d_{p-1}(x)=\hat d_{p-1}(\htau_{p-1}(\ell_{p-1}))$.

\end{enumerate}

In words, what we do is identify the vertices with largest $\hatd_p$ values and reduce their values by 1. If $T_{p-1}$ is not consistent and we cannot reduce all of those with the same value, we reduce those with largest index (i.e., those that come later in the ordering). When $T_{p-1}$ is consistent, we still take the first $\ell_{p-1}$ vertices, even if all of those with the same value are not reduced. We will say that $N_p$ is \textit{good} if $\{v_1,\ldots,v_{k-r}\}\subseteq N_p$. The existence of at least $r$ vertices among $\{v_{k+1},\ldots,v_n\}$ such that laying off each gives a good $N_p$ will yield the $K_{k-r,r}$ we seek.~\\

\noindent\textbf{An example:} Let us do a small example to illustrate the way the Laying-off Algorithm works.
\begin{sloppypar}
Begin with the graphic sequence $\pi = (9,9,9,9,8,8,7,7,7,7,4,4,4,4)= (d(v_1), \ldots, d(v_{14}))$.
For the purposes of this example, we will only identify the neighborhoods of the vertices with degree 4.
\end{sloppypar}
\begin{description}
\item[Step 1]
Since the original ordering of vertices is consistent, we assign the neighborhood of $v_{14}$ to be $N_1=\{v_1,v_2,v_3,v_4\}$. The new sequence is $\pi_1=(8^6,7^4,4^3)$, and since $\hatd_1(v_4)\geq\hatd_1(v_5)$, there is no reordering of vertices and $T_1$ is consistent.
\item[Step 2]
Since $T_1$ is consistent, we can simply assign the set $N_2=\{v_1,v_2,v_3,v_4\}$ to the neighborhood of $v_{13}$. Now $\pi_2=(8,8,7^8,4^2)$. However, the vertices are no longer in their original order; the sequence $T_2$ is: $v_5,v_6,v_1,v_2,v_3,v_4,v_7,v_8,v_9,v_{10},v_{11},v_{12}$.
\item[Step 3]
Since $T_2$ is not consistent, we must consider $\hat d_2(\htau_2(4))$. Since $\hatd_2(\htau_2(4))=\hatd_2(\htau_2(5))$, we cannot simply assign the four highest-degree vertices to $N_3$.
We begin with $N_3=\{v_5,v_6\}$, the two highest-degree vertices. Then we need two more vertices, so we take the two vertices of degree $\hatd_2(\htau_2(4))=7$ that have the highest index, that is $v_9$ and $v_{10}$. So $N_3=\{v_5,v_6,v_9,v_{10}\}$. This leaves $\pi_3=(7^8,6^2,4)$, and $T_3$ is consistent.
\item[Step 4]
Since $T_3$ is consistent, we let $N_4 = \{v_1, v_2, v_3, v_4\}$. Then $\pi_4=(7^4,6^6)$.
\end{description}
Observe that at each step $\pi_i$ is exactly the sequence we would get after $i$ iterations of the Kleitman-Wang algorithm if a term of value 4 is laid off each time.~\\

\noindent\textbf{Proof that the Laying-off Algorithm gives $r$ good neighborhoods:} Now we will show that this process does create $r$ vertices among $\{v_{k+1},\ldots,v_n\}$ that have good neighborhoods. We begin with several claims that develop useful properties of the Laying-off Algorithm, in particular the key observation that $\ell_p \geq \ell_{p-1}$ for all $p \leq rk$.
Then, we show that at each iteration of the algorithm, the sequence $T_p$ has a certain structure that allows us to easily count the number of iterations needed to find $r$ good $N_p$s. ~\\

\noindent\textbf{Claim 1.} If $p\leq r(k+1)$ and $i<j$, then $\hatd_p(v_j)\leq \hatd_p(v_i)+1$.~\\

\noindent\textit{Proof of Claim 1.} If $\hatd_p(v_j)\geq\hatd_p(v_i)+2$ then, since $d_0(v_i)\geq d_0(v_j)$, there exists a $p'$ such that $\hatd_{p'-1}(v_j)=\hatd_{p'-1}(v_i)$, $v_i\in N_{p'}$ and $v_j\not\in N_{p'}$, and there also exists a $p''$ such that $\hatd_{p''-1}(v_j)=\hatd_{p''-1}(v_i)+1$, $v_i\in N_{p''}$ and $v_j\not\in N_{p''}$.  But such a $p''$ cannot exist because if $\hatd_{p''-1}(v_j)>\hatd_{p''-1}(v_i)$, then $v_i\in N_{p''}$ implies $v_j$ is also in $N_{p''}$.  This contradiction proves Claim 1.~\hfill~$\Box$~\\

\noindent\textbf{Claim 2.} If $p\leq r(k+1)$ and $j\leq k-r$, then $\hatd_{p-1}(v_j)\geq k$. In addition, if $S_{p-1}\cap N_p \neq\emptyset$, then $N_p$ is good.~\\

\noindent\textit{Proof of Claim 2.} If $v_j$ is not laid off, then $d_j$ decreases by at most 1 at each step and so $\hatd_{p-1}(v_j)\geq d_j-(p-1)$.
Because $d_{k-r}\geq d_k+r(k+2)$, we have the following:
\[ \hatd_{p-1}(v_j)\geq d_j-(p-1)\geq d_{k-r}-(p-1)\geq d_k+r(k+2)-(p-1)\geq d_k+r. \]
The conditions on the sequence force $d_k\geq k-r$, giving $\hatd_{p-1}(v_j)\geq d_k+r\geq k$. As a result, if $S_{p-1}\cap N_p\neq\emptyset$, then $N_p$ must contain every vertex with remaining degree greater than $\hatd_p(v_{a_p})\leq k-1$. This includes all of $\{v_1,\ldots,v_{k-r}\}$ and so $N_p$ must be good.~\hfill~$\Box$~\\

Let $g_p$ denote the number of good neighborhoods $N_{p'}$ with $p'\leq p$. We may assume that $g_p\leq r-1$ for all $p\leq r(k+1)$. Otherwise, we would have $r$ good neighborhoods, hence our copy of $K_{k-r,r}$. In particular, by Claim 2 we can assume that there are at most $r-1$ values of $p$ for which $S_{p-1}\cap N_p\neq\emptyset$. ~\\

\noindent\textbf{Claim 3.} If $p\leq r(k+1)$, then $|S_{p-1}\cap N_p|\leq r-1$ and $|S_{p-1}|>r(k+r+1)-g_{p-1}(r-1)-(p-1)\geq 2r$. In addition, every $v\in N_p$ has $\hatd_{p-1}(v)$ at least as large as the least value of $\hatd_{p-1}$ among members of $S_{p-1}$.~\\

\noindent\textit{Proof of Claim 3.} Consider the vertex $v_{a_p}$.  It has degree at most $k-1$ when it is laid off. By Claim 2, there are at least $k-r$ vertices $v_j$ with $\hatd_{p-1}(v_j)\geq k$ and so $|S_{p-1}\cap N_p|\leq (k-1)-(k-r)=r-1$.  Because a vertex will only leave the set $S_p$ if it has been laid off or assigned to the neighborhoods of enough other vertices that its remaining degree is too low,
\[
|S_{p-1}|\geq r(k+r+1)-\left|\bigcup_{j=1}^{p-1}\{S_{j-1}\cap N_j\}\right|-(p-1)\geq r(k+r+1)-g_{p-1}(r-1)-(p-1).
\]
Since $g_{p-1}\leq r-1$, we have $|S_{p-1}|\geq r(k+r+1)-(r-1)^2-(p-1)\geq 2r$.  If we include the vertices $\{v_1,\ldots,v_{k-r}\}$, there are a total of at least $k$ vertices $w$ for which $\hatd_{p-1}(w)$ is at least the minimum value of $\hat d_{p-1}$ among the members of $S_{p-1}$.  This proves Claim 3.~\hfill~$\Box$~\\

\noindent\textbf{Claim 4.} If $\ell_p<\ell_{p-1}$ for some $p\leq rk$, then at most $r$ more iterations of the Laying-off Algorithm will create the desired copy of $K_{k-r,r}$. \\

\noindent\textit{Proof of Claim 4.} By definition, $\ell_{p-1}=\hatd_{p-1}(v_{a_p})$ and $\ell_p=\hatd_p(v_{a_{p+1}})$. Since $v_{a_p}$ was chosen to minimize $\hatd_{p-1}$ among $S_{p-1}$, we know that $\hatd_{p-1}(v_{a_{p+1}})\geq\ell_{p-1}\geq k-r$.

Since $\hatd_{p}(v_{a_{p+1}})\geq\hatd_{p-1}(v_{a_{p+1}})-1$, we get $\ell_p=\hatd_{p}(v_{a_{p+1}})\geq\ell_{p-1}-1$. Thus, $\ell_p=\ell_{p-1}-1$. This means that $\hatd_{p-1}(v_{a_{p+1}})=\ell_{p-1}$ and $v_{a_{p+1}}\in N_p$. Since $v_{a_{p+1}}\in S_{p-1}$ as well, Claim 2 gives that $N_p$ is good.

Further, as $\ell_{p-1}$ is also the minimum remaining degree of any vertex in $S_{p-1}$, Claim 3 gives that every vertex $w$ in $N_p$ has $\hatd_{p-1}(w)\geq\ell_{p-1}$. Since $v_{a_{p+1}}\in N_p$, we conclude that $\hatd_{p-1}(\htau_{p-1}(\ell_{p-1}))=\ell_{p-1}$. Since $\hatd_{p-1}(v_{k-r})>\hatd_{p-1}(v_{a_{p+1}})$, there are at most $\ell_{p-1}-1$ vertices $w$ with $\hatd_{p-1}(w)\geq\hatd_{p-1}(v_{k-r})$.

So, if we can show that $\ell_{p'}\geq\ell_{p-1}-1$ for all $p'$ such that $p\leq p'\leq p+(r-g_p)$, then each $N_{p'}$ is good and we have the desired $K_{k-r,r}$ in at most $r$ more steps. From Claim 3, there are at most $r-2$ vertices in $S_{p-1}\cap N_p$ that have remaining degree larger than $\hatd_{p-1}(v_{a_{p+1}})=\ell_{p-1}$. Also from Claim 3, $|S_{p-1}|>r(k+r+1)-g_{p-1}(r-1)-(p-1)$. Thus, there are at least
\[ r(k+r+1)-g_{p-1}(r-1)-(p-1)-(r-2)> r(r-g_{p-1}) \]
vertices of remaining degree equal to $\ell_{p-1}$ in $S_{p-1}$. Since Claim 3 gives that $|S_{p'-1}\cap N_{p'}|\leq r-1$, for all $p'\geq p$, each of the next $r-g_p$ iterations of the Laying-off Algorithm will remove at most $r$ vertices from $S_{p-1}$ which have remaining degree equal to $\ell_{p-1}$.

Hence there is always a vertex in $S_{p-1}$ with degree equal to $\ell_{p-1}$. Thus, no vertex with degree $\ell_{p-1}-1$ will be placed into $N_{p'}$, and Claim 4 is proved.~\hfill~$\Box$~\\

We can thus assume that $\ell_{p} \geq \ell_{p-1}$ for all $p \leq rk$. ~\\

Now we are prepared to examine the structure of the sequence $T_p$.
Claim 5 below is the main observation, that even when the Laying-off Algorithm results in an inconsistent sequence, the sequence that results is of a very specific form. Thus,  the Laying-off Algorithm ensures that the number of iterations between consistent sequences is less than $k$.

To show this, we say the sequence $T_p$ is of \textit{proper form} if there is a partition of $V_p$ into four ordered sets $\tau_p^{(1)}$, $\tau_p^{(2)}$, $\tau_p^{(3)}$ and $\tau_p^{(4)}$ (where the order is inherited from $\tau_p$) such that $\hat d_p$ is constant on each of $\tau_p^{(2)}$ and $\tau_p^{(3)}$ and, when $i<j$ and $v_i,v_j\in V_p$, $v_j$ precedes $v_i$ if and only if $v_j\in\tau_p^{(2)}$ and $v_i\in\tau_p^{(3)}$.
By Claim 1, we know that in this case $\hat d_p(v_j) \leq \hat d_p(v_i)+1$.
Note that this allows for $\tau_p^{(2)}$ and $\tau_p^{(3)}$ to be empty, in which case $T_p$ is consistent.

We will abuse notation to let $\cup$ represent the ``concatenation'' of ordered sets; that is, $\tau_p^{(i)} \cup \tau_p^{(j)}$ is also an ordered set, where the elements of $\tau_p^{(i)}$ precede those of $\tau_p^{(j)}$, and within each set the original order is maintained.
Thus, if $T_p$ is of proper form, both $\tau_p^{(1)}\cup\tau_p^{(2)}$ and $\tau_p^{(3)}\cup\tau_p^{(4)}$ are consistent.  For a sequence $T_p$ that is of proper form, the \textit{inconsistency} of $T_p$ is $\left|\tau_p^{(2)}\cup\tau_p^{(3)}\right|$. A consistent sequence has inconsistency zero.~\\

\noindent\textbf{Claim 5.} For all $p\in\{0,\ldots,rk-1\}$, $T_p$ is of proper form. If $T_{p-1}$ is consistent or has inconsistency at least $k$, then $N_p$ is good. If $T_p$ is inconsistent, then $|\tau_p^{(1)}\cup\tau_p^{(3)}|\leq\ell_{p-1}$. If $T_{p-1}$ has positive inconsistency, then either
\begin{itemize}
   \item $T_p$ is consistent (and $N_p$ is good),
   \item $T_p = \hat T_{p-1}$ and $N_p$ is good, or 
   \item $T_p$ has inconsistency strictly less than the inconsistency of $T_{p-1}$.
\end{itemize}~\\

\noindent\textit{Proof of Claim 5.}
We will prove the claim by induction on $p$.

If $p=0$, then $T_p=T_0$ is consistent.  Moreover $N_{p+1}$ is good because it is simply the first $\ell_p$ entries of $\hat T_p$, which must contain $v_1,\ldots,v_{k-r}$. In fact, this is true for any consistent $T_p$ and this will be our base case for the induction.

We assume the statement of the claim is true for $T_0,\ldots,T_{p-1}$.~\\

\noindent\textbf{Case 1:} $T_{p-1}$ is consistent.~\\
The set $N_p$ is good because it is simply the first $\ell_{p-1}$ entries of $\hat T_{p-1}$, which must contain $v_1,\ldots,v_{k-r}$.
If $T_p$ is consistent, then it is, by definition, of proper form.

If $T_p$ is not consistent, then $\hat d_{p-1}(\htau_{p-1}(\ell_{p-1})) = \hat d_{p-1}(\htau_{p-1}(\ell_{p-1}+1))$. We can partition $\hat T_{p-1}$ into $\htau_{p-1}^{(1)} \cup \htau_{p-1}^{(4)L} \cup \htau_{p-1}^{(4)R}$, where $\htau_{p-1}^{(4)L}$ contains all vertices with remaining degree exactly $\hat d_{p-1}(\htau_{p-1}(\ell_{p-1}))$ and $\htau_{p-1}^{(4)R}$ contains those with lower remaining degree.  We can further partition $\htau_{p-1}^{(4)L}$ into $\htau_{p-1}^{(4)L_1}$ and $\htau_{p-1}^{(4)L_2}$, where $\htau_{p-1}^{(4)L_1}$ contains all vertices of $\htau_{p-1}^{(4)L}$ that are included in $N_p$, and $\htau_{p-1}^{(4)L_2}$ consists of those that are not.

Now $T_p$ can be partitioned into
\begin{align*}
   \tau_p^{(1)} = &  \htau_{p-1}^{(1)}\\
   \tau_p^{(2)} = & \htau_{p-1}^{(4)L_2} \\
   \tau_p^{(3)} = & \htau_{p-1}^{(4)L_1}, \text{ and } \\
   \tau_p^{(4)} = & \htau_{p-1}^{(4)R},
\end{align*}
and it is of proper form.
Clearly $|\tau^{(1)}_p\cup\tau^{(3)}_p|=\ell_{p-1}$.~\\

Observe that if $T_{p-1}$ is consistent and $T_p$ is not, then $\{v_1,\ldots,v_{k-r}\}$ is contained in $\tau_p^{(1)}\cup\tau_p^{(2)}\cup\tau_p^{(3)}$.~\\

\noindent\textbf{Case 2:} $T_{p-1}$ is not consistent.~\\
\indent Recall that $\ell_{p-1}=|N_p|$, the number of vertices in $V_p$ that are reduced by one when a vertex of $T_{p-1}$ is laid off.
The effect of the Laying-off Algorithm on $T_p$ depends on the value of $\ell_{p-1}$.

Note that $\ell_{p-1} \leq |\htau_{p-1}^{(1)}|$ is not possible because Claim 4 allows us to assume that $\ell_{p-1}\geq\ell_{p-2}$. Since $\ell_{p-2}\geq |\tau_{p-1}^{(1)}\cup\tau_{p-1}^{(3)}|>|\htau_{p-1}^{(1)}|$, this is a contradiction.

With this information, we can show that if the inconsistency of $T_{p-1}$ is at least $k$, then $N_p$ is good. The largest $k$ entries of $T_{p-1}$ are in $\tau_{p-1}^{(1)}\cup\tau_{p-1}^{(2)}\cup\tau_{p-1}^{(3)}$ and $N_p$ contains all of $\htau_{p-1}^{(1)}$. Because there are $r(k+r+1)$ vertices eligible to be laid off from $\{v_{k+1},\ldots,v_n\}$, and we've laid off at most $rk$, the vertices $v_{k-r},\ldots,v_k$ will not be laid off.
If $v_k$ is in $\htau_{p-1}^{(3)}$, then its value is at most $d_k-1$, and if $v_k$ is in $\htau_{p-1}^{(2)}$, then its value is at most $d_k$. But the degree of each of $v_1,\ldots,v_{k-r}$ is at least $d_{k-r}-kr\geq d_k+r(k+2)-kr>d_k$.  So, each of $v_1,\ldots,v_{k-r}$ are in $\htau_{p-1}^{(1)}$ and will be in $N_p$ as long as the inconsistency is at least $k$. ~\\

\noindent\textbf{Case 2a:} $|\htau_{p-1}^{(1)}|<\ell_{p-1}<|\htau_{p-1}^{(1)}\cup\htau_{p-1}^{(2)}|$.~\\
\indent In this case, we can partition $\htau_{p-1}^{(2)}$ into two pieces: $\htau_{p-1}^{(2)L}$ and $\htau_{p-1}^{(2)R}$.  The members of $\htau_{p-1}^{(2)R}$ are reduced when $v_{a_p}$ is laid off, but those of $\htau_{p-1}^{(2)L}$ are not.

After reordering, we obtain the following
\begin{align*}
   \tau_p^{(1)} = & \htau_{p-1}^{(1)}, \\
   \tau_p^{(2)} = & \htau_{p-1}^{(2)L}, \\
   \tau_p^{(3)} = & \htau_{p-1}^{(3)}, \\
   \tau_p^{(4)} = & \htau_{p-1}^{(2)R}\cup\htau_{p-1}^{(4)}.
\end{align*}
Moreover, $\ell_{p-1}\geq\ell_{p-2}\geq |\htau_{p-1}^{(1)}\cup\htau_{p-1}^{(3)}|=|\htau_p^{(1)}\cup\htau_p^{(3)}|$. In addition, the inconsistency of $T_p$ is $|\tau_p^{(2)}\cup\tau_p^{(3)}|=|\htau_{p-1}^{(2)L}\cup\htau_{p-1}^{(3)}|$, which is strictly less than the inconsistency of $T_{p-1}$ because $\htau_{p-1}^{(2)L}$ is a strict subset of $\htau_{p-1}^{(2)}$.~\\

To proceed through the next cases, we must partition $\htau_{p-1}^{(4)}$ into two pieces: $\htau_{p-1}^{(4)L}$ and $\htau_{p-1}^{(4)R}$. The members of $\htau_{p-1}^{(4)L}$ have the same remaining degree as those in $\htau_{p-1}^{(3)}$, and those of $\htau_{p-1}^{(4)R}$ have smaller remaining degree (either or both of these may be empty).~\\

\noindent\textbf{Case 2b:} $|\htau_{p-1}^{(1)}\cup\htau_{p-1}^{(2)}|\leq\ell_{p-1}\leq |\htau_{p-1}^{(1)}\cup\htau_{p-1}^{(2)}|+|\htau_{p-1}^{(4)L}|$.~\\
\indent In this case, the values of $\htau_{p-1}^{(1)}\cup\htau_{p-1}^{(2)}$ as well as some of $\htau_{p-1}^{(4)L}$ are reduced. Since the members of $\htau_{p-1}^{(2)}\cup\htau_{p-1}^{(3)}$ (and the unreduced values of $\htau_{p-1}^{(4)L}$) now have the same value, reordering results in $T_p$ being a consistent sequence.~\\

\noindent\textbf{Case 2c:} $|\htau_{p-1}^{(1)}\cup\htau_{p-1}^{(2)}|+|\htau_{p-1}^{(4)L}| <\ell_{p-1} <|\htau_{p-1}^{(1)}\cup\htau_{p-1}^{(2)}\cup\htau_{p-1}^{(3)}\cup\htau_{p-1}^{(4)L}|$.~\\

In this case, we can partition $\htau_{p-1}^{(3)}$ into two pieces: $\htau_{p-1}^{(3)L}$ and $\htau_{p-1}^{(3)R}$.  The members of $\htau_{p-1}^{(3)R}$ are reduced but those of $\htau_{p-1}^{(3)L}$ are not.

After reordering, we obtain the following
\begin{align*}
   \tau_p^{(1)} = & \htau_{p-1}^{(1)}\cup\htau_{p-1}^{(3)L}, \\
   \tau_p^{(2)} = & \htau_{p-1}^{(2)}, \\
   \tau_p^{(3)} = & \htau_{p-1}^{(3)R}, \\
   \tau_p^{(4)} = & \htau_{p-1}^{(4)}.
\end{align*}
Moreover, $\ell_{p-1}\geq\ell_{p-2}\geq |\htau_{p-1}^{(1)}\cup\htau_{p-1}^{(3)}|=|\htau_p^{(1)}\cup\htau_p^{(3)}|$. In addition, the inconsistency of $T_p$ is $|\tau_p^{(2)}\cup\tau_p^{(3)}|=|\htau_{p-1}^{(2)}\cup\htau_{p-1}^{(3)R}|$, which is strictly less than the inconsistency of $T_{p-1}$ because $\htau_{p-1}^{(3)R}$ is a strict subset of $\htau_{p-1}^{(3)}$.~\\

\noindent\textbf{Case 2d:} $|\htau_{p-1}^{(1)}\cup\htau_{p-1}^{(2)}\cup\htau_{p-1}^{(3)}\cup\htau_{p-1}^{(4)L}|\leq\ell_{p-1}$.~\\
\indent In this case, no rearranging is necessary: the order of the vertices in  $T_p$ is the same as the order in $\hat T_{p-1}$.

Because the only vertices out of order are in $\htau_{p-1}^{(2)}\cup\htau_{p-1}^{(3)}$, $N_p$ will contain all of the first $\ell_{p-1}$ vertices. Since $\ell_{p-1}\geq k-r$, the neighborhood $N_p$ must contain $\{v_1,\ldots,v_{k-r}\}$ and thus be good.

This concludes the proof of Claim 5.~\\

Given Claim 5, the proof of Lemma~\ref{biclique} follows easily.  There can be at most $k-1$ neighborhoods that are not good between consecutive good neighborhoods. So after $(r-1)k+1$ iterations of the procedure, there will be $r$ good neighborhoods.

\end{proof}


\begin{thebibliography}{99}

\bibitem{BK} N. Bushaw and N. Kettle. Tur\'an numbers of Multiple Paths and Equibipartite Trees, {\it Combin. Probab. Comput.} {\bf 20} (2011), 837--853

\bibitem{ChenLiYin} G. Chen, J. Li and J. Yin, A variation of a classical
Tur\'{a}n-type extremal problem, {\it European J. Comb.}
{\bf 25} (2004), 989--1002.

\bibitem{CGPW} G. Chen, R. Gould, F. Pfender and B. Wei, Extremal Graphs for Intersecting Cliques, {\it J. Combin. Theory Ser. B} {\bf 89} (2003), 159--181.

\bibitem{Erd} P. Erd\H{o}s, On the graph theorem of Tur\'{a}n (Hungarian), \textit{Mat. Lapok.} \textbf{21} (1970), 249--251.

\bibitem{EFGG} P. Erd\H{o}s, Z. F\"{u}redi, R. Gould and D. Gunderson, Extremal Graphs for Intersecting Triangles, {\it J. Combin. Theory Ser. B} {\bf 64} (1995), 89--100.

\bibitem{ErdGal} P. Erd\H{o}s and T. Gallai, Graphs with prescribed degrees of vertices (Hungarian), \textit{Mat. Lapok.} \textbf{11}, 264--274, 1960.

\bibitem{EJL} P. Erd\H{o}s, M.S. Jacobson, and J. Lehel, Graphs Realizing the Same Degree Sequence and their Respective Clique Numbers. \textit{Graph Theory, Combinatorics and Applications} (eds. Alavi, Chartrand, Oellerman and Schwenk), Vol. 1, 1991, 439--449.

\bibitem{ESS} P. Erd\H{o}s and M. Simonovits, A Limit Theorem in Graph Theory, {\it Studia Sci. Math. Hungar.} {\bf 1} (1966), 51--57.

\bibitem{ES} P. Erd\H{o}s and A. Stone, On the Structure of Linear Graphs, {\it Bull. Amer. Math. Soc.} {\bf 52} (1946), 1087--1091.

\bibitem{F} M. Ferrara, Graphic Sequences with a Realization Containing a Union of Cliques, {\it Graphs Comb.} {\bf 23} (2007), 263--269.

\bibitem{FLMW} M. Ferrara, T. LeSaulnier, C. Moffatt, P. Wenger. On the Sum Necessary to Ensure that a Degree Sequence is Potentially $H$-graphic, submitted.

\bibitem{FS} M. Ferrara and J. Schmitt,  A General Lower Bound for Potentially $H$-Graphic Sequences, {\it SIAM J. Discrete Math.} {\bf 23} (2009), 517--526.


\bibitem{Hak} S.L. Hakimi, On the realizability of a set of integers as degrees of vertices of a graph, \textit{J. SIAM Appl. Math}, \textbf{10} (1962), 496--506.

\bibitem{Hav} V. Havel, A remark on the existence of finite graphs (Czech), \textit{\v Casopis P\v est. Mat.} \textbf{80} (1955), 477--480.

\bibitem{KezLeh} A.E. K\'ezdy and J. Lehel, Degree sequences of graphs with prescribed clique size, in: Y. Alavi et al. (Ed.), \textit{Combinatorics, Graph Theory, and Algorithms}, Vol. 1, New Issues Press, Kalamazoo, Michigan, 1999, 535--544.

\bibitem{KleitWang} D. Kleitman and D. Wang, Algorithms for constructing graphs and digraphs with given valences and factors, \textit{Discrete Math.} \textbf{6} (1973) 79--88.


\bibitem{LiXia}  J. Li and Z. Song, The smallest degree sum
that yields potentially $P_k$-graphical sequences, {\it J. Graph Theory}
{\bf 29} (1998), 63--72.

\bibitem{LiSongLuo}  J. Li, Z. Song, and R. Luo, The
Erd\H{o}s-Jacobson-Lehel conjecture on potentially $P_k$-graphic
sequences is true, {\it Science in China, Ser. A},
{\bf 41} (1998), 510--520.

\bibitem{LiYin} J. Li and J. Yin, An extremal problem on potentially $K_{r,s}$-graphic sequences, \textit{Discrete Math.} \textbf{260} (2003), 295--305.

\bibitem{LiYin2} J. Li and J. Yin, An extremal problem on
potentially $K_{r,s}$-graphic sequences, {\it Discrete Math.} {\bf 260}
(2003), 295--305.

\bibitem{PikTar} O. Pikhurko and A. Taraz, Degree sequences of $F$-free graphs, \textit{Electronic J. Combin.} \textbf{12} (2005), R69, 12pp.

\bibitem{Rao} A.R. Rao, An Erd\H{o}s-Gallai type result on the clique number of a realization of a degree sequence. (unpublished)

\bibitem{TriVij} A. Tripathi and S. Vijay, A note on a theorem of Erd\H{o}s and Gallai, \textit{Discr. Math.} \textbf{265}, 417--420, 2003.

\bibitem{Tur41} P. Tur\'{a}n, Eine Extremalaufgabe aus der Graphentheorie, {\it Mat. Fiz. Lapook} {\bf 48} (1941), 436--452.

\bibitem{Yin} J.H. Yin, A Rao-type characterization for a sequence to have a realization containing a split graph, \textit{Discrete Math.} \textbf{311} (2011), 2485--2489.

\bibitem{YinRao} J.H. Yin, A short constructive proof of A. R. Rao's characterization of potentially $K_{r+1}$-graphic sequences, {\it Discrete Applied Math.} {\bf 160} (2012), 352--354.

\bibitem{ZZ} I.\ E.\ Zverovich and V.\ E.\ Zverovich, Contributions to the theory of graphic sequences, {\it Discr. Math.} {\bf 103} (1992), 293--303.

\end{thebibliography}
\end{document}